\newtheorem{thm}{Theorem}[section]
\begin{document}
\title{{\bf Riemann Zeta Function Expressed as the Difference of Two
Symmetrized Factorials Whose Zeros All Have Real Part of $1/2$}}
\author{Wusheng Zhu \\
}
\date{July 17, 2012}
\maketitle
\begin{abstract}
In this paper, some new results are reported for the study of
Riemann zeta function $\zeta(s)$ in the critical strip
$0\!<\!Re(s)\!<\!1$, such as $\zeta(s)$ expressed in a generalized
Euler product only involving prime numbers. Particularly, some new
absolutely convergent series representations of $\zeta(s)$ based
on binomial expansion are presented. The crucial progress is to
find that $\zeta(s)$ can be expressed as a linear combination of
polynomials of infinite degree, whose consequences are shown in
several aspects: (i) numerically it provides a scenario to
construct very fast convergent algorithm to calculate $\zeta(s)$;
(ii) interestingly it shows that Lagrange interpolation using
infinite number of integer Euler zeta functions reproduces the
exact complex $\zeta(s)$; (iii) surprisingly it demonstrates that
alternating Riemann zeta function (or other entire functions
removing the pole of zeta function) is admissible to Melzak
combinatorial transform for polynomials. Applying the functional
symmetry on $\zeta(s)$ in the form of Melzak transform induces
$\zeta(s)$ being written as the difference of two symmetrized
factorials whose zeros are proved to all have real part of $1/2$.
Furthermore, the two symmetrized factorials are proved to have
interlacing between the two sequences of the imaginary part of
their zeros on upper (or lower) half plane, which ensures the
difference of the two symmetrized factorials [proportional to
$\zeta(s)$] attaining the same feature of zeros with real part of
$1/2$ to endorse Riemann hypothesis.
\end{abstract}
\pagebreak

\section{Introduction}
\quad\quad Riemann zeta function $\zeta(s)$ is originally defined
by analytic continuation via contour integral to extend Euler zeta
function from the domain $Re(s)>1$ to the whole complex
domain$^{\cite{R:1}}$. If restricted on $Re(s)>0$, $\zeta(s)$ has
many equivalent representations either by series summation or
real variable integration.

A popular series definition of $\zeta(s)$ on $Re(s)\!>\!0$ is via
Dirichlet eta function $\eta(s)$:
\begin{eqnarray}
\zeta(s)=\dfrac{1}{1-2^{1-s}}\eta(s)=\dfrac{1}{1-2^{1-s}}\displaystyle{\sum_{n=1}^{\infty}}\dfrac{(-1)^{n+1}}{n^s}=\dfrac{1}{1-2^{1-s}}\left(
1-\dfrac{1}{2^s}+\dfrac{1}{3^s}-\dfrac{1}{4^s}+\cdots\right)~~\label{eq1}
\end{eqnarray}
where the series is conditionally convergent on $Re(s)\!>\!0$, and
$\zeta(s)$ has a pole at $s\!=\!1$ [$\eta(s)$ is an entire
function with $\eta(1)\!=\!\ln 2]$. For $s\!=\!1\!+\!i 2n\pi/\ln 2
$ $(n\!\ne\! 0)$ satisfying $1\!-\!2^{1-s}\!=\!0$ were proved to
be zeros of $\eta(s)$ but not zeros of $\zeta(s)$ $^{\cite{R:2}}$.
All other zeros of $\eta(s)$ are the same as those of $\zeta(s)$.
Riemann hypothesis is equivalent to state that any $s$ on
$0\!<\!Re(s)\!<\!1$ which satisfies $\eta(s)\!=\!0$ must have
$Re(s)\!=\!1/2$ and $Im(s)\!\ne\! 0$.

A well-known (Fourier) integral representation of $\zeta(s)$ on
$Re(s)>0$ is
\begin{eqnarray}
\zeta(s)=\dfrac{1}{(1-2^{1-s})\Gamma(s)}\int_0^\infty
\dfrac{x^{s-1}}{e^x+1}dx=\dfrac{1}{(1-2^{1-s})\Gamma(s)}\int_{-\infty}^\infty
\dfrac{e^{-[{\rm Re}(s)] t}}{e^{e^{-t}}+1} e^{-i [{\rm Im}(s)] t}
dt\label{eq2}
\end{eqnarray}
There exist many integrals for $\zeta(s)$. Two examples are shown
below (proofs omitted):
\begin{eqnarray}
\int_0^\infty x^{n-2+s}
\dfrac{d^n}{dx^n}\!\left(\dfrac{x}{e^x\!-\!1}\!\right) dx = (-1)^n
(s-1)\Gamma(n-\!1\!+\!s)\zeta(s)~~~~~(n\!\in\! \mathbb{N},\;Re(s)\!>0)~~\label{eq2.3}\\
\int_0^\infty x^{n-2+s}
\dfrac{d^n}{dx^n}\ln\!\left(\dfrac{x}{1\!-\!e^{-x}}\!\right)dx =
(-1)^{n} \Gamma(n-1\!+\!s)\zeta(s)~~~~~(n\!\in\!
\mathbb{N}_0,\;0\!<\!Re(s)\!<\!1)~~\label{eq2.4}
\end{eqnarray}

We only consider $Re(s)>0$ due to the fact that $\zeta(s)$ has a
functional symmetry:
\begin{eqnarray}
\zeta(1-s)=2^{1-s}\pi^{-s}cos(\frac{\pi
s}{2})\Gamma(s)\zeta(s)\label{eq3}
\end{eqnarray}
which can explore $\zeta(s)$ on $Re(s)<0$ by its symmetric
counterpart on $Re(s)>0$. Furthermore, on $Re(s)>1$, Equation
(\ref{eq1}) becomes Euler zeta function which has no zeros proved
by Euler product expansion. Thus only the critical strip
$0<Re(s)<1$ needs to focus where nontrivial zeros may exist.
\begin{thm}
A real positive $s$ can not be a zero of Riemann zeta function
such that $\zeta(s)\ne 0$ if $Im(s)=0$ and $Re(s)>0$.
\label{thm1.1}
\end{thm}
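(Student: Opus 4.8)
The plan is to split the positive real axis into the three natural regions $s>1$, $s=1$, and $0<s<1$, and to dispose of each by an elementary estimate on the series defining $\zeta(s)$ or on the eta series in (\ref{eq1}). No deep input (not even the functional equation (\ref{eq3})) is needed, since reflection maps $(0,1)$ into itself and so would not reduce the hard case to an easy one.

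First, for real $s>1$ the Dirichlet series $\zeta(s)=\sum_{n\ge 1}n^{-s}$ converges absolutely with every term strictly positive, so $\zeta(s)\ge 1>0$; equivalently, the Euler product $\prod_p(1-p^{-s})^{-1}$ is a convergent product of positive factors and hence nonzero. The point $s=1$ needs no argument at all, since $\zeta$ has a simple pole there and is not even finite, let alone zero. The substantive case is $0<s<1$, where I would use the representation $\zeta(s)=(1-2^{1-s})^{-1}\eta(s)$ and check that each factor is nonzero. For the prefactor, $0<s<1$ forces $1-s\in(0,1)$, hence $2^{1-s}\in(1,2)$ and $1-2^{1-s}\ne 0$ (indeed strictly negative). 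For $\eta(s)=\sum_{n\ge 1}(-1)^{n+1}n^{-s}$, the sequence $a_n=n^{-s}$ is, for fixed real $s>0$, strictly decreasing to $0$, so the alternating series test applies: the sum lies strictly between consecutive partial sums, and in particular $\eta(s)>a_1-a_2=1-2^{-s}>0$. Dividing, $\zeta(s)=\eta(s)/(1-2^{1-s})$ is a nonzero (in fact negative) real number for every $s\in(0,1)$.

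I do not expect a genuine obstacle here; the only points requiring care are (i) not dividing by zero, i.e. confirming $1-2^{1-s}\ne 0$ on $(0,1)$, which is immediate, and (ii) keeping the alternating-series bound strict, so that the conclusion is $\zeta(s)\ne 0$ rather than the weaker statement that $\zeta(s)$ is merely real. Combining the three regions yields $\zeta(s)\ne 0$ for all real $s>0$, which is Theorem~\ref{thm1.1}.
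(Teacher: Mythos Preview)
Your argument is correct. The alternating-series bound $\eta(s)>1-2^{-s}>0$ for real $s>0$ is the key observation in the range $0<s<1$, and together with $1-2^{1-s}\in(-1,0)$ there it yields $\zeta(s)<0$, so in particular $\zeta(s)\ne 0$; the cases $s>1$ and $s=1$ are as easy as you say.

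The paper takes a different route. Rather than splitting the half-line, it works from the integral representation~(\ref{eq2}),
\[
(1-2^{1-s})\Gamma(s)\zeta(s)=\int_0^\infty \frac{x^{s-1}}{e^x+1}\,dx,
\]
and observes that for real $s>0$ the integrand is strictly positive, so the integral is positive and $\zeta(s)$ cannot vanish. (The paper actually phrases this via the mean-value theorem applied to the real part $\int_0^\infty x^{Re(s)-1}(e^x+1)^{-1}\cos[Im(s)\ln x]\,dx$, noting that the cosine factor is identically~$1$ when $Im(s)=0$.) The paper's proof is a single unified estimate valid for all real $s>0$ at once, at the cost of invoking the integral formula. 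Your proof is more self-contained---it needs only the defining series of $\eta$ and the alternating-series test---but requires a short case split. Either way the statement is entirely elementary; the paper emphasizes it only because later arguments rely on $\eta(s)$ having no real zeros on $s>0$.
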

\begin{proof}
From Eq.~(\ref{eq2}) on $Re(s)>0$, if $\zeta(s)=0$, it must
require that
\begin{eqnarray}
\int_0^\infty \dfrac{x^{Re(s)-1}}{e^x+1} \cos[Im (s)\ln x]
dx=\cos[Im (s)\ln x_1]\int_0^\infty \dfrac{x^{Re(s)-1}}{e^x+1} dx
=0 \label{eq4}
\end{eqnarray}
where mean value theorem for integral is applied, and $x_1\in (0,
\infty)$. As the latter integral is positive, it must have
$\cos[Im (s)\ln x_1]=0$ that can not hold with $Im(s)= 0$.
\end{proof}
Theorem \ref{thm1.1} is simple and known but very important result
as it implies that any polynomial expression of $\zeta(s)$ will
not have any real zero on $Re(s)>0$.

\begin{thm}
The functional symmetry in Eq.~(\ref{eq3}) holds on
$0\!<\!Re(s)\!<\!1$.\label{thm1.2}
\end{thm}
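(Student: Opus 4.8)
The plan is to derive the \emph{symmetric} (Riemann) form of the functional equation and then convert it to Eq.~(\ref{eq3}), checking at every step that nothing degenerates on the strip $0\!<\!Re(s)\!<\!1$. First, for $Re(s)>1$ I would start from the elementary Mellin integral $\int_0^\infty x^{s/2-1}e^{-\pi n^2 x}\,dx=\pi^{-s/2}\Gamma(s/2)\,n^{-s}$ and sum over $n\ge 1$; absolute convergence makes the interchange legitimate, giving $\pi^{-s/2}\Gamma(s/2)\zeta(s)=\int_0^\infty x^{s/2-1}\omega(x)\,dx$ with $\omega(x)=\sum_{n\ge1}e^{-\pi n^2 x}$.

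Second, the crucial ingredient is Jacobi's theta transformation $1+2\omega(1/x)=\sqrt{x}\,\bigl(1+2\omega(x)\bigr)$, itself a consequence of Poisson summation applied to the Gaussian. Splitting $\int_0^\infty=\int_0^1+\int_1^\infty$ and applying $x\mapsto 1/x$ together with this identity to the first piece yields
\[
\pi^{-s/2}\Gamma(s/2)\zeta(s)=\frac{1}{s(s-1)}+\int_1^\infty\bigl(x^{s/2-1}+x^{-(s+1)/2}\bigr)\,\omega(x)\,dx .
\]
Since $\omega(x)=O(e^{-\pi x})$ as $x\to\infty$, the integral on the right converges for every $s\in\mathbb{C}$ and the whole right-hand side is visibly invariant under $s\mapsto 1-s$; hence $\pi^{-s/2}\Gamma(s/2)\zeta(s)=\pi^{-(1-s)/2}\Gamma((1-s)/2)\zeta(1-s)$. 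On $0\!<\!Re(s)\!<\!1$ one has $Re(1-s)$ also in $(0,1)$, so $\Gamma(s/2)$ and $\Gamma((1-s)/2)$ are finite and nonzero and $\zeta(s)$, $\zeta(1-s)$ are finite: the identity is a bona fide equality there, with no hidden pole (and consistency at a putative zero of $\zeta$ in the strip is automatic).

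Third, I would convert this to Eq.~(\ref{eq3}) using only the Legendre duplication formula $\Gamma(s/2)\Gamma\bigl(\tfrac{s+1}{2}\bigr)=2^{1-s}\sqrt{\pi}\,\Gamma(s)$ and the reflection formula $\Gamma\bigl(\tfrac{1+s}{2}\bigr)\Gamma\bigl(\tfrac{1-s}{2}\bigr)=\pi/\cos(\pi s/2)$; dividing the former by the latter gives $\Gamma(s/2)/\Gamma\bigl(\tfrac{1-s}{2}\bigr)=2^{1-s}\pi^{-1/2}\cos(\pi s/2)\,\Gamma(s)$, and since the symmetric identity rearranges to $\zeta(1-s)=\pi^{1/2-s}\,\bigl[\Gamma(s/2)/\Gamma((1-s)/2)\bigr]\,\zeta(s)$, substitution reproduces exactly $\zeta(1-s)=2^{1-s}\pi^{-s}\cos(\tfrac{\pi s}{2})\Gamma(s)\zeta(s)$.

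The step I expect to be the main obstacle is the rigorous justification of Jacobi's transformation and of the accompanying bookkeeping — splitting the improper integral, performing the substitution termwise, and handling the two boundary terms that assemble into $\frac{1}{s(s-1)}$ — with everything else being routine Mellin calculus and standard $\Gamma$-identities. An alternative that never leaves the strip replaces the theta function by the Fourier expansion $\{x\}=\tfrac12-\sum_{n\ge1}\frac{\sin 2\pi n x}{\pi n}$ inside $\zeta(s)=-s\int_0^\infty\{x\}\,x^{-s-1}\,dx$ (valid for $0\!<\!Re(s)\!<\!1$), followed by $\int_0^\infty u^{-s-1}\sin u\,du=-\Gamma(-s)\sin(\pi s/2)$; there the obstacle migrates to justifying term-by-term integration of a conditionally and non-uniformly convergent Fourier series, which one handles by Abel summation together with a tail estimate near the integers.
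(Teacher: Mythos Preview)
Your argument is correct and is the classical Riemann route, but it is \emph{not} the paper's proof. The paper bypasses theta functions and Poisson summation entirely: it starts from the Mellin integral $(1-2^{-s})\Gamma(s)\zeta(s)=\int_0^\infty x^{s-1}\bigl[\tfrac12\operatorname{csch}(x)-\tfrac{1}{2x}\bigr]\,dx$, inserts the Mittag--Leffler expansion $\tfrac12\operatorname{csch}(x)-\tfrac{1}{2x}=\sum_{k\ge1}(-1)^k x/(x^2+\pi^2k^2)$, rescales $x\mapsto kx$ termwise to pull out $\sum_{k\ge1}(-1)^k k^{s-1}=(2^{s}-1)\zeta(1-s)$, and is left with the elementary integral $\int_0^\infty x^{s}/(x^2+\pi^2)\,dx=\pi^{s}/\bigl(2\cos(\pi s/2)\bigr)$. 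This gives the asymmetric form of the functional equation in one line, with no need for the Legendre duplication or reflection formulas. Your approach buys you global analytic continuation and the symmetric $\xi$-form; the paper's approach buys brevity and elementarity (no Jacobi identity, no $\Gamma$-gymnastics) at the cost of being confined to the strip, which is exactly what the theorem asserts. The ``main obstacle'' you anticipate---justifying Jacobi's transformation and the boundary bookkeeping---simply does not arise in the paper's argument; the corresponding delicate point there is the interchange of sum and integral after inserting the partial-fraction series, which the paper leaves implicit.
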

\begin{proof}
From another integral for $\zeta(s)$ on $0\!<\!Re(s)\!<\!1$ and
series expansion of ${\rm csch}(x)$:
\begin{eqnarray}
(1-2^{-s})\Gamma(s)\zeta(s)=\int_0^\infty x^{s-1}
\left[\dfrac{1}{2}{\rm
csch}(x)-\dfrac{1}{2x}\right]dx=\int_0^\infty x^{s}
\sum_{k=1}^{\infty} \dfrac{(-1)^k}{x^2+\pi^2 k^2} dx\label{eq5}
\end{eqnarray}
we have the following identity:
\begin{eqnarray}
(1\!-\!2^{-s})\Gamma(s)\zeta(s)&\!\!=\!\!&\int_0^\infty\!
\sum_{k=1}^{\infty} (-)^k
k^{s-1}\dfrac{\left(\dfrac{x}{k}\right)^{s}}{\left(\dfrac{x}{k}\right)^2\!+\!\pi^2}
d\left(\dfrac{x}{k}\right)=(2^{s}\!-\!1)\,\zeta(1\!-\!s)
\int_0^\infty\!\!\!\! \dfrac{x^{s}}{x^2\!+\!\pi^2} dx\nonumber\\
&\!\!=\!\!&(2^{s}-1)\zeta(1-s) \dfrac{\pi^s}{2cos(\frac{\pi
s}{2})}~~~~\label{eq6}
\end{eqnarray}
which is a simple proof of the functional symmetry in
Eq.~(\ref{eq3}) on $0\!<\!Re(s)\!<\!1$.
\end{proof}
Theorem \ref{thm1.2}, which is well-known since Riemann, is the
key to answer Riemann hypothesis. The proof here is very simple,
though only in critical strip.

\begin{thm} $\zeta(s)$ on $Re(s)>0$ can be expressed into a generalized Euler product:
\begin{eqnarray}
\zeta(s)\!=\!\left(\prod_{j=1}^k \dfrac{1}{1-p_j^{-s}}\right)
\left[1\!+\!\lim_{m\to\infty}\!\!\displaystyle{ \sum_{ \substack{2j-1=p_{k+1} \\
2j-1 \;\nmid \;3,5,7,\cdots,\,p_k }}^{p_{k+m}}\!\!\! (2j-1)^{-s}}
-\dfrac{p_{k+m}^{1-s}}{1\!-\!s}\prod_{j=1}^{k}(1\!-\!p_j^{-1})\right]
~~~~\label{eq11}
\end{eqnarray}
where $\{k,m\}\in{\mathbb N}$, and $(2j-1)$ includes the prime
numbers larger than $p_k$ and all their possible products less
than a sufficiently large prime number $p_{k+m}$.
 \label{thm1.4}
\end{thm}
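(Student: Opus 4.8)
The plan is to derive Eq.~(\ref{eq11}) from the classical Euler product, which holds on $Re(s)>1$, and then to propagate the resulting identity to the whole half-plane $Re(s)>0$ by analytic continuation; the decisive ingredient is an Abel summation that isolates the single divergent term as $m\to\infty$. First I would multiply $\zeta(s)=\prod_p(1-p^{-s})^{-1}$ by $\prod_{j=1}^k(1-p_j^{-s})$, obtaining, for $Re(s)>1$,
\[
\zeta(s)\prod_{j=1}^k(1-p_j^{-s})=\prod_{j>k}\frac{1}{1-p_j^{-s}}=\sum_{n\in S}n^{-s},\qquad S:=\{n\in\mathbb{N}:\ p\mid n\Rightarrow p>p_k\}.
\]
An integer $n>1$ lies in $S$ precisely when it is odd and divisible by none of $p_2=3,\dots,p_k$; writing $n=2j-1$, the part of this series with $1<n\le p_{k+m}$ is exactly $\sum_{\substack{2j-1=p_{k+1}\\ 2j-1\,\nmid\,3,\dots,p_k}}^{p_{k+m}}(2j-1)^{-s}$, and the term $n=1$ supplies the leading ``$1$'' of Eq.~(\ref{eq11}). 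Hence, with $C:=\prod_{j=1}^k(1-p_j^{-1})$, it suffices to show
\[
\lim_{m\to\infty}\Bigl(\sum_{n\in S,\ n\le p_{k+m}}n^{-s}-\frac{C\,p_{k+m}^{\,1-s}}{1-s}\Bigr)=\zeta(s)\prod_{j=1}^k(1-p_j^{-s})\qquad(Re(s)>0,\ s\ne1).
\]

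Next I would introduce the counting function $A(x):=\#\{n\in S:\ n\le x\}$. By Legendre's inclusion--exclusion over the squarefree divisors $d$ of $P:=p_1\cdots p_k$, $A(x)=\sum_{d\mid P}\mu(d)\lfloor x/d\rfloor=Cx+E(x)$ with the error uniformly bounded, $|E(x)|\le 2^k$. Riemann--Stieltjes (Abel) summation gives, for every $N$ and every $s\ne1$,
\[
\sum_{n\in S,\ n\le N}n^{-s}=A(N)N^{-s}+s\int_1^N A(x)\,x^{-s-1}\,dx,
\]
and substituting $A(x)=Cx+E(x)$ — the terms $CN^{1-s}$ and $\tfrac{Cs}{1-s}N^{1-s}$ combining into $\tfrac{C}{1-s}N^{1-s}$ — yields the clean identity
\[
\sum_{n\in S,\ n\le N}n^{-s}-\frac{C\,N^{1-s}}{1-s}=E(N)N^{-s}-\frac{Cs}{1-s}+s\int_1^N E(x)\,x^{-s-1}\,dx.
\]

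I would then let $N=p_{k+m}\to\infty$ with $Re(s)>0$: the term $E(N)N^{-s}$ vanishes and $\int_1^\infty E(x)x^{-s-1}\,dx$ converges absolutely (being dominated by $2^k x^{-Re(s)-1}$), so the left-hand side converges to a function holomorphic on $Re(s)>0$ apart from a simple pole at $s=1$. On $Re(s)>1$ the subtracted term tends to $0$, so this limit equals $\sum_{n\in S}n^{-s}=\zeta(s)\prod_{j=1}^k(1-p_j^{-s})$; since $\prod_{j=1}^k(1-p_j^{-s})$ is entire and nonvanishing on $Re(s)>0$ (all its zeros lie on $Re(s)=0$) while $\zeta(s)\prod_{j=1}^k(1-p_j^{-s})$ is meromorphic there with only the pole at $s=1$, the identity theorem propagates the equality from $Re(s)>1$ to all of $Re(s)>0$. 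Dividing by $\prod_{j=1}^k(1-p_j^{-s})$ and restoring the $n=1$ term reproduces Eq.~(\ref{eq11}), and since $k$ was arbitrary the argument is complete.

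I expect the main obstacle to be the bookkeeping in the Abel-summation step: one must verify that the coefficient of the divergent term is \emph{exactly} $C=\prod_{j=1}^k(1-p_j^{-1})$, so that $p_{k+m}^{\,1-s}/(1-s)$ multiplied by this constant is precisely the right subtraction, and that the remaining pieces genuinely tend to a limit rather than leaving an unbounded residue or a spurious additive constant — i.e.\ that the limit equals $\zeta(s)\prod_{j=1}^k(1-p_j^{-s})$ on the nose, not merely up to $O(1)$. The uniform bound $|E(x)|\le 2^k$ together with the absolute convergence of the error integral for $Re(s)>0$ are exactly what close this gap, and the restriction to prime truncation points $N=p_{k+m}$ is harmless because the limit over all integers $N$ already exists.
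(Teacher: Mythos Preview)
Your proof is correct and takes a genuinely different route from the paper. The paper argues iteratively: it begins with the Abel--Euler--Maclaurin identity (Eq.~(\ref{eq12})) expressing the partial sum over odd integers minus its comparison integral as $(1-2^{-s})\zeta(s)+\tfrac{1}{2(1-s)}$, then multiplies by $p_j^{-s}$ and subtracts for $j=2,3,\dots,k$ in turn, sieving out one prime at each step until Eq.~(\ref{eq14}) emerges. Your argument instead goes directly: you multiply the Euler product by $\prod_{j\le k}(1-p_j^{-s})$ once, identify the resulting Dirichlet series as the sum over $S$, and use Abel summation together with the Legendre sieve density $A(x)=Cx+O_k(1)$ to isolate the single divergent term, then invoke analytic continuation from $Re(s)>1$ to $Re(s)>0$.

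What each buys: the paper's recursive sieve is more hands-on and makes the appearance of the constant $\prod_{j\le k}(1-p_j^{-1})$ look like the result of $k$ successive scalings of the integral term, but it leans on the unproved starting identity (\ref{eq12}) and leaves the bookkeeping of the shifting summation ranges implicit. Your approach explains that constant conceptually as the natural density of $S$, makes the convergence on $0<Re(s)\le1$ transparent via the bounded error $|E(x)|\le 2^k$, and is the standard analytic-number-theory template (partial summation plus identity theorem), so it generalizes immediately to sieving by any finite set of primes without redoing the induction.
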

\begin{proof}
The summation-integral difference for $n\to\infty$ related to
$\zeta(s)$ is
\begin{eqnarray}
1+\dfrac{1}{3^s}+\dfrac{1}{5^s}+\dfrac{1}{7^s}+\cdots+\dfrac{1}{(2n-1)^s}\!-\!\!\int_1^n
\!\!\dfrac{1}{(2x\!-\!1)^s}dx
=(1\!-\!2^{-s})\zeta(s)+\dfrac{1}{2(1\!-\!s)}\label{eq12}
\end{eqnarray}
Multiplying $3^{-s}$ on Eq.~(\ref{eq12}) and subtracting from
Eq.~(\ref{eq12}) yields
\begin{eqnarray}
1\!+\!\dfrac{1}{5^s}\!+\!\cdots\!+\!\dfrac{1}{(2n\!-\!1)^s}
\!-\!(1\!-\!\frac{1}{3})\!\int_1^n \!\!\!\dfrac{1}{(2x-1)^s}dx
=(1\!-\!3^{-s})(1\!-\!2^{-s})\zeta(s)\!+\!\dfrac{1\!-\!\frac{1}{3}}{2(1\!-\!s)}\label{eq13}
\end{eqnarray}
Repeating the procedure till the $k$th prime number obtains a
generalized Euler product:
\begin{eqnarray}
1+\displaystyle{\sum_{ \substack{j=(1+p_{k+1})/2
\\ 2j-1 \;\nmid \;3,5,7,\cdots,\;p_k }}^{n} (2j-1)^{-s}}
-\dfrac{(2n-1)^{1-s}}{1-s}\displaystyle{\prod_{j=1}^{k}(1-p_j^{-1})}=\zeta(s)\displaystyle{\prod_{j=1}^{k}(1-p_j^{-s})}\label{eq14}
\end{eqnarray}
An infinite large prime number $p_{k+m}$ replacing $2n\!-\!1$
turns Eq.~(\ref{eq14}) to Eq.~(\ref{eq11}).
\end{proof}
On $Re(s)>1$, the summation and product in the square brackets in
Eq.~(\ref{eq11}) are towards zero when $k\!\to\!\infty$ which
gives regular Euler product. On $0\!<\!Re(s)\!<\!1$, if $k$ is
chosen finite, terms in the square brackets in Eq.~(\ref{eq11})
converge after cancellation of divergent components; if
$k\!\to\!\infty$ is chosen, and $\zeta(s)\!\ne\! 0$, the
right-hand-side of Eq.~(\ref{eq11}) will be in the form of
$0\cdot\infty$ in which L'Hopital's rule applies. If
$\zeta(s)\!=\!\zeta(1\!-\!s)\!=\!0$, considered Eq.~(\ref{eq14}),
 the following equation holds for $n\!\to\!\infty$:
\begin{eqnarray}
\dfrac{\Bigg|1+\displaystyle{\sum_{ \substack{j=(1+p_{k+1})/2
\\ 2j-1 \;\nmid \;3,5,7,\cdots,\;p_k }}^{n} (2j-1)^{-s}}\Bigg|}
{\Bigg|1+\displaystyle{\sum_{ \substack{j=(1+p_{k+1})/2
\\ 2j-1 \;\nmid \;3,5,7,\cdots,\;p_k }}^{n} (2j-1)^{s-1}}\Bigg|}
=\dfrac{|s|}{|1-s|}(2n-1)^{1-2Re(s)}\label{eq15}
\end{eqnarray}
which implies an equivalent Riemann hypothesis as: {\it On
$0\!<\!Re(s)\!\le\! 1/2$, if $\zeta(s)\!=\!0$, the norm ratio in
the left-hand-side of Eq.~(\ref{eq15}) converges to $1$ when
$n\!\to\!\infty$ and $k\in{\mathbb N}$.}

\section{Binomial Series Expansion of $\zeta(s)$}
\quad\quad Convergent series expansions of $\zeta(s)$ are very
limited. The following is such an example based on absolutely
convergent binomial series:
\begin{eqnarray}
\eta(s)= 1+\sum_{n=1}^{\infty}\dfrac{(n-s)!}{n!(1-s)!}
\sum_{j=2}^\infty (-1)^{j-1}
\dfrac{n}{j^2}\left(1-\dfrac{1}{j}\right)^{n-1}
~~~~~(Re(s)>0)~~~~\label{eq16}
\end{eqnarray}
Equation (\ref{eq16}) is arrived by the series definition of
$\eta(s)$ in Eq.~(\ref{eq1}):
\begin{eqnarray}
\eta(s)&=&1+\dfrac{1}{1-s}\sum_{j=2}^\infty (-1)^{j-1}
\left(\dfrac{\partial}{\partial x_j}
\left[1-\left(1-\dfrac{1}{x_j}\right)
\right]^{s-1}\right)\bigg|_{x_j=j}\label{eq17}
\end{eqnarray}
followed by a binomial series expansion on $(1\!-\!X)^{s-1}$ with
$X\!=\!1\!-1/j$ that converges absolutely for finite $j$, while
may diverge for infinite $j$ due to $|X|\!\to\! 1$. Fortunately,
due to the factor $n/j^2$ arisen from the derivative, the series
in Eq.~(\ref{eq16}) will converge.
\begin{thm}
The binomial series expansion of $\eta(s)$ in Eq.~(\ref{eq16})
converges absolutely on $Re(s)\!>\!0$.\label{thm2.1}
\end{thm}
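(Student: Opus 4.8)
The first point is to fix the meaning of the statement. Write $a_n(s):=\dfrac{(n-s)!}{n!\,(1-s)!}=\dfrac{\Gamma(n+1-s)}{\Gamma(n+1)\,\Gamma(2-s)}$ and $b_n:=\displaystyle\sum_{j=2}^{\infty}(-1)^{j-1}\,\dfrac{n}{j^2}\Big(1-\dfrac1j\Big)^{n-1}$. For each fixed $n$ the inner sum is already absolutely convergent, its terms being $O(n/j^2)$, so $b_n$ is a well-defined real number and Eq.~(\ref{eq16}) reads $\eta(s)=1+\sum_{n\ge1}a_n(s)\,b_n$; the assertion to prove is $\sum_{n\ge1}|a_n(s)|\,|b_n|<\infty$ on $\mathrm{Re}(s)>0$. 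Note that the full double series over the pair $(n,j)$ is only conditionally convergent when $0<\mathrm{Re}(s)\le1$: a bare triangle inequality on the $j$-sum gives only $|b_n|=O(1)$ (via $(1-1/j)^{n-1}\le e^{-(n-1)/j}$ and comparison with $\int_1^{\infty} n\,e^{-(n-1)/t}\,t^{-2}\,dt\to1$), which would yield convergence only on $\mathrm{Re}(s)>1$. So the alternating sign in $j$ has to be exploited.

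I would establish two estimates. First, the classical bound $|a_n(s)|\le C_s\,n^{-\mathrm{Re}(s)}$ for all $n\ge1$, either from the ratio asymptotics $\Gamma(n+1-s)/\Gamma(n+1)=n^{-s}\big(1+O(1/n)\big)$, or from $|a_n(s)|=|\Gamma(2-s)|^{-1}\prod_{k=2}^{n}|1-s/k|$ together with $\sum_{k=2}^{n}\log|1-s/k|=-\mathrm{Re}(s)\log n+O_s(1)$; the finitely many small $n$ and the isolated positive-integer values of $s$ (where the poles of numerator and denominator cancel) are absorbed into $C_s$. Second, the decay bound $|b_n|\le C/n$.

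The second estimate is the crux. Put $f(t):=n\,t^{-2}(1-1/t)^{n-1}=n\,(t-1)^{n-1}t^{-n-1}$ for $t\ge2$. Pairing consecutive terms of the alternating $j$-sum gives $b_n=\sum_{k\ge1}\big(f(2k)-f(2k+1)\big)$, hence $|b_n|\le\sum_{k\ge1}\int_{2k}^{2k+1}|f'(t)|\,dt\le\int_2^{\infty}|f'(t)|\,dt$. Since $(\log f)'(t)=\dfrac{n-1}{t-1}-\dfrac{n+1}{t}$ vanishes at exactly one point $t^{\ast}=(n+1)/2$ on $(1,\infty)$ and is positive before it, negative after, the function $f$ is unimodal on $[2,\infty)$ whenever $n\ge3$ (non-increasing when $n=3$), so $\int_2^{\infty}|f'|=2f(t^{\ast})-f(2)\le2f(t^{\ast})$. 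A direct computation gives $f(t^{\ast})=\dfrac{4n(n-1)^{n-1}}{(n+1)^{n+1}}=\dfrac{4n}{(n+1)^2}\Big(1-\dfrac{2}{n+1}\Big)^{n-1}\le\dfrac{4e^{-1}}{n}$ for $n\ge3$, while $b_1,b_2$ are fixed finite numbers; thus $|b_n|\le C/n$ for all $n$ with $C=8e^{-1}$. Combining the two estimates, $\sum_{n\ge1}|a_n(s)|\,|b_n|\le O_s(1)+C\,C_s\sum_{n\ge3}n^{-1-\mathrm{Re}(s)}<\infty$, and since this last series already diverges at $\mathrm{Re}(s)=0$, the half-plane $\mathrm{Re}(s)>0$ in the statement is sharp.

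The main obstacle is precisely the step $|b_n|=O(1/n)$: one must turn the oscillation in $j$ into a genuine gain of one power of $n$, and the device for this is the unimodality of $f(t)=n(t-1)^{n-1}t^{-n-1}$ together with the explicit value $f((n+1)/2)=4n(n-1)^{n-1}(n+1)^{-n-1}$ of its maximum, equivalently a total-variation bound for $f$ on $[2,\infty)$. One could iterate the same idea with higher-order finite differences to show that $|b_n|$ in fact decays faster than any power of $1/n$, but this refinement is unnecessary here — the first-order bound $O(1/n)$ already matches the stated range $\mathrm{Re}(s)>0$.
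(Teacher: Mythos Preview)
Your argument is correct and shares the paper's overall architecture: bound $|a_n(s)|=O(n^{-\mathrm{Re}(s)})$ via the Gamma-ratio asymptotics (the paper's Eq.~(\ref{eq19})), bound $|b_n|=O(1/n)$ by pairing consecutive terms of the alternating $j$-sum, and conclude by comparison with $\sum n^{-1-\mathrm{Re}(s)}$. The only substantive difference is in how the key estimate $|b_n|=O(1/n)$ is obtained. The paper bounds each paired difference $f(2k)-f(2k+1)$ directly by $\big[\tfrac{n}{(2k)^2}-\tfrac{n}{(2k+1)^2}\big](1-\tfrac{1}{2k})^{n-1}$, majorises this by $\tfrac{n}{(2k)^2 k}\,e^{-(n-1)/(2k)}$, and compares the resulting sum to an integral (Eq.~(\ref{eq20})). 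You instead control $\sum_k|f(2k)-f(2k+1)|$ by the total variation $\int_2^\infty|f'|$ and use the unimodality of $f(t)=n(t-1)^{n-1}t^{-n-1}$, with its explicit maximum at $t^\ast=(n+1)/2$, to read off $\int_2^\infty|f'|\le 2f(t^\ast)=O(1/n)$. Your route is somewhat cleaner---it sidesteps the first inequality in Eq.~(\ref{eq20}), whose justification is not entirely spelled out in the paper---and it makes transparent why a single pairing cannot do better than $O(1/n)$, consistent with your closing sharpness remark. (Minor quibble: your pairing actually produces $-b_n$ rather than $b_n$, but this is immaterial for the absolute-value bound.)
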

\begin{proof}
Comparison test of the series in Eq.~(\ref{eq16}) with Euler zeta
function will be done as follows. For a sufficient large $n$, the
absolute value of $n$th term in Eq.~(\ref{eq16}) is
\begin{eqnarray}
\left|\dfrac{(n-s)!}{n!(1\!-\!s)!} \right|\left|\sum_{j=2}^\infty
\dfrac{n(-1)^{j-1}}{j^2}\left(1\!-\!\dfrac{1}{j}\right)^{n-1}\right|\!=O\left(\dfrac{1}{n^{Re(s)}}\right)\left|\sum_{j=2}^\infty
\dfrac{n(-1)^{j-1}}{j^2}\left(1\!-\!\dfrac{1}{j}\right)^{n-1}\right|~~\label{eq18}
\end{eqnarray}
where the binomial coefficients asymptotic expansion
\begin{eqnarray}
\binom{n-s}{n}=\dfrac{1}{n^s
(-s)!}\left[1+\binom{s}{2}\frac{1}{n}+O\left(\frac{1}{n^{2}}\right)\right]\label{eq19}
\end{eqnarray}
is applied. For the summation of $j$ in Eq.~(\ref{eq18}), we have
\begin{eqnarray}
&&\!\!\!\!\!\!\!\!\left|\sum_{j=1}^{\infty}\!\!\left[
\dfrac{n}{(2j)^2}\!\!\left(\!1\!-\!\dfrac{1}{2j}\!\right)^{\!\!n-1}\!\!\!\!\!-
\dfrac{n}{(2j\!+\!1)^2}\!\!\left(\!1\!-\!\dfrac{1}{2j\!+\!1}\!\!\right)^{\!\!n-1}
\right]\right|\!<\! \sum_{j=1}^{\infty}\!\left[
\dfrac{n}{(2j)^2}\!-\!
\dfrac{n}{(2j\!+\!1)^2}\right]\!\!\left(\!1\!-\!\dfrac{1}{2j}\!\right)^{\!\!n-1}\nonumber\\
&&\qquad\qquad< \sum_{j=1}^{\infty} \dfrac{n}{(2j)^2
j}e^{-\frac{n-1}{2j}}=O\left(\frac{1}{n}\right)~~~~~\label{eq20}
\end{eqnarray}
where the last step is realized by comparison with the integral
result. Thus for sufficiently large $n$, considered
Eqs.~(\ref{eq20}) and (\ref{eq18}), it has
\begin{eqnarray}
\left|\dfrac{(n\!-\!s)!}{n!(1\!-\!s)!} \!\sum_{j=2}^\infty\!
\dfrac{n(-1)^{j-1}}{j^2}\!\!\left(\!1\!-\!\dfrac{1}{j}\right)^{\!\!n-1}\right|
= O\left(\!\frac{1}{n^{1+Re(s)}}\right)\label{eq22}
\end{eqnarray}
which proves the absolute convergence of Eq.~(\ref{eq16}) on
$Re(s)>0$ as compared to the $n$th term of Euler zeta function
$\zeta(\alpha)$ whose absolute convergence occurs on
$Re(\alpha)>1$.
\end{proof}
In addition, many slightly different convergent binomial series
expansion of $\zeta(s)$ or $\eta(s)$ can be derived from a general
form of
\begin{eqnarray}
(1-2^{1-s})\zeta(s)=\dfrac{\left(-\frac{s-\alpha}{\gamma}\right)!}{\left(q-\frac{s-\alpha}{\gamma}\right)!}\sum_{j=1}^\infty
\dfrac{(-1)^{j-1}}{j^\alpha}\!\!
\left.\left(\dfrac{\partial^{\;q}}{\partial x_j^q}
\!\left[1\!-\!\!\left(1\!-\!\dfrac{1}{x_j^\beta}\right)
\right]^{\frac{\frac{s-\alpha}{\gamma}-q}{\beta}}\right)\right|_{x_j=j^\gamma}\label{eq23}
\end{eqnarray}
where $q\in{\mathbb N},\;\{\alpha,\beta,\gamma\}\in{\mathbb C}$
with certain constraints for convergence. A special case of
Eq.~(\ref{eq23}) on binomial series expansion of $(1-s)\zeta(s)$
 has been studied in literature$^{\cite{R:7}}$. If choosing
$\{\alpha\!=\!0,\;\beta\to 0,\;\gamma\!=\!q\!=\!1\}$, it induces
Taylor series expansion of $\eta(s)$ at $s=1$:
\begin{eqnarray}
\eta(s)&=&\eta(1)+\lim_{\beta \to
0}\sum_{n=2}^{\infty}\dfrac{(-1)^n}{1-s}\binom{\frac{s-1}{\beta}}{n}
\sum_{j=2}^\infty (-1)^{j-1}
\dfrac{n\;\beta}{j^{1+\beta}}\left(1-\dfrac{1}{j^\beta}\right)^{n-1}\nonumber\\
&=&\eta(1)+\lim_{\beta \to
0}\sum_{n=2}^{\infty}\dfrac{(-1)^n}{1-s}\binom{\frac{s-1}{\beta}}{n}
n \beta \sum_{\ell=0}^{n-1} (-1)^\ell \binom{n-1}{\ell}
\sum_{m=1}^{\infty}\dfrac{\eta^{(m)}(1)}{m!}[\beta(\ell+1)]^m\nonumber\\
&=&\eta(1)+\sum_{n=2}^{\infty} (1-s)^{n-1}
\dfrac{\eta^{(n-1)}(1)}{(n-1)!} (-1)^{n-1}\label{eq24}
\end{eqnarray}
There also exist convergent binomial series of $\zeta(s)$ [instead
of $\eta(s)$ in Eq.~(\ref{eq16})] such as
\begin{eqnarray}
\zeta(s)= \dfrac{1}{s-1}+1+\sum_{n=1}^{\infty}
\dfrac{(n-s)!}{n!(1-s)!} \left[-1+ \sum_{j=2}^{\infty}
\dfrac{n}{j^2}\left(1-\dfrac{1}{j}\right)^{n-1}
\right]~~~~(Re(s)>0)~~~~\label{eq16.2}
\end{eqnarray}
But we will focus on the series expansion in Eq.~(\ref{eq16}),
which is equivalent to
\begin{eqnarray}
\eta(s) =\eta(2)+\sum_{n=2}^{\infty}
\dfrac{\chi(n)}{n!}\displaystyle{\prod_{j=2}^{n}}(j-s)~~~~~~~~~~(Re(s)>0)
\label{eq25}
\end{eqnarray}
where $\chi(n)$ $(n\ge 1)$ is defined as
\begin{eqnarray}
\chi(n)=\sum_{j=2}^\infty (-1)^{j-1}
\dfrac{n}{j^2}\left(1-\dfrac{1}{j}\right)^{n-1}\label{eq26}
\end{eqnarray}
Some properties of $\chi(n)$ are $\chi(1)=\eta(2)-1$,\;\;
$\displaystyle{\sum_{n=1}^\infty }\frac{\chi(n)}{n}=\eta(1)-1$,\;
and
\begin{eqnarray}
\chi(\infty)\!=\!\lim_{n\to\infty}\dfrac{1}{2}\sum_{j=1}^{\infty}\!\Bigg[
 \dfrac{\left(1\!-\!\frac{1}{n\left(\frac{2j+1}{n}\right)}\right)^{\!\!n-1}}{\left(\frac{2j+1}{n}\right)^2}
-
\dfrac{\left(1\!-\!\frac{1}{n\left(\frac{2j}{n}\right)}\right)^{\!\!n-1}}{\left(\frac{2j}{n}\right)^2}
\Bigg]\dfrac{2}{n}=0~~\label{eq27}
 \end{eqnarray}
For a finite $n$, $\chi(n)$ is the difference between midpoint and
top-right corner rectangle approaches to the same integral,
approximately $\exp({-1/x})/x^2$. As the sign of the difference
summation in Eq.~(\ref{eq27}) depends on $n$, $\chi(n)$ converges
to zero in oscillation pattern [it is a subtle issue on the zeros
of $\chi(z)$ as analytic continuation of $\chi(n)$].

Applying further binomial expansion on $\chi(n)$ in
Eq.~(\ref{eq26}) will turn Eq.~(\ref{eq16}) into
\begin{eqnarray}
\eta(s)&=&\sum_{n=1}^{\infty}\dfrac{(n-s)!}{(1-s)!}
\sum_{k=0}^{n-1}
\dfrac{(-1)^{k}\eta(k+2)}{k!(n-k-1)!}~~~~~~~(Re(s)>0)\label{eq28}
\end{eqnarray}
where the binomial expansion on $(1-X)^{n-1}$ with $|X|=1/j\le 1$
and $n\in{\mathbb N}$ absolutely converges, so does the series
expansion in Eq.~(\ref{eq28}). It is worth to mention that
numerically Eq.~(\ref{eq16}) is more favorable due to fast
convergence of $\chi(n)$, while Eq.~(\ref{eq28}) suffers a
catastrophic cancellation problem in summation of $k$ due to
alternating binomial coefficients, which relies on high precision
of inputs in order for accurate output.

The crucial step towards revealing the zeros feature of $\zeta(s)$
is to switch the order of the two summations in Eq.~(\ref{eq28})
as follows:
\begin{eqnarray}
\eta(s)&=&\lim_{m\to\infty}\sum_{n=1}^{m+1}
\dfrac{(n-s)!}{(1-s)!}\sum_{k=0}^{n-1}
\dfrac{(-1)^{k}\eta(k+2)}{k!(n-k-1)!}\nonumber\\
&=&\lim_{m\to\infty}\sum_{k=0}^{m}
\dfrac{(-1)^{k}\eta(k+2)}{(1-s)!k!} \sum_{n=k+1}^{m+1}
\dfrac{(n-s)!}{(n-k-1)!}\label{eq28.2}
\end{eqnarray}
which can be further written into
\begin{eqnarray}
\eta(s)&=&\lim_{m\to\infty}\dfrac{(m+2-s)!}{m!(1-s)!}\sum_{k=0}^{m}
\binom{m}{k}
\dfrac{(-1)^k\eta(k+2)}{k+2-s}\nonumber\\
&=&\lim_{m\to\infty}\sum_{k=0}^{m}
\dfrac{(-1)^k\eta(k+2)}{k!(m-k)!}\prod_{\substack{j=0 \\
j\ne k}}^{m}(j+2-s)\label{eq29}
\end{eqnarray}
The validity of switching the two summations in Eq.~(\ref{eq28.2})
and the limit existence of $m\!\to\!\infty$ are ensured by
absolute convergence of the two series. Equation (\ref{eq29}) can
be numerically verified. For example, a small $m\!=\!32$ in
Eq.~(\ref{eq29}) outputs a quite accurate
$\zeta(1/2)\!=\!-1.46034778$. If $\{\alpha\!=\!0,\; \beta\!=\!1,\;
\gamma\!=\!2L,\; q\!=\!J\}$ are chosen in Eq.~(\ref{eq23}), a
general version of Eq.~(\ref{eq29}) can be developed for $\eta(s)$
by using a subset of $\zeta(2k)$:
\begin{eqnarray}
\eta(s)\!
=\!\lim_{m\to\infty}\!\dfrac{(m\!+\!1\!+\!J\!-\!\frac{s}{2L})!}{m!(J-\frac{s}{2L})!}\sum_{k=0}^{m}
\binom{m}{k} \dfrac{(-1)^k
\eta[2L(k\!+\!J\!+\!1)]}{k\!+\!J\!+\!1\!-\!\frac{s}{2L}}~~~~~(J\in{\mathbb
N_0}, L\in{\mathbb N})~~~~\label{eq30}
\end{eqnarray}

\subsection{$\zeta(s)$ calculated by fast convergent scheme.}
\quad\quad Equation (\ref{eq30}) indicates that $\zeta(s)$ can be
calculated by a subset of $\zeta(2k)$ with large $k$ resulting in
fast convergence. A numerical scheme to calculate $\zeta(s)$ with
fast convergence can be constructed by using the following
identity for $\zeta(s)$ with $k\in {\mathbb N},$ and $Re(s)>0$:
\begin{eqnarray}
[1-(2k-1)^{s-1}](2^{s}-1)\zeta(s)= \sum_{n=2}^{\infty} \Bigg[1-
\dfrac{\displaystyle{\sum_{j=1}^{2k-1}}
(2j-1)^n}{(2k-1)^{n+1}}\Bigg] \dfrac{\Gamma(n+s)}{2^n
n!\Gamma(s)}\zeta(n+s)\label{eq31}
\end{eqnarray}
\begin{proof}
Starting from the integral representation of $\zeta(s)$
\begin{eqnarray}
\int_0^\infty\dfrac{ e^{\frac{x}{2}}\!-1}{\!e^{x}-1} x^{s-1} dx
=(2^s\!-\!2)\Gamma(s)\zeta(s)~~~~~(Re(s)>0)~~~~~\label{eq32}
\end{eqnarray}
we use geometric series summation and integral variable scaling in
Eq.~(\ref{eq32}) to obtain
\begin{eqnarray}
[1\!-\!(2k-1)^{s-1}](2^s\!-\!1)\Gamma(s)\zeta(s)\!=\!\!
\int_0^\infty\!\dfrac{e^{\frac{x}{2}}\!-\!\frac{1}{2k-1}\displaystyle{\sum_{j=1}^{2k-1}}
e^{\frac{(2j-1)x}{4k-2}}}{e^{x}-1}x^{s-1}dx~~~~~(Re(s)>0)~~~~~\label{eq33}
\end{eqnarray}
where $k\in {\mathbb N}$. Then Eq.~(\ref{eq31}) is arrived by
taking power series expansion of exponential functions on
numerator in Eq.~(\ref{eq33}) and integrating each term of the
power series.
\end{proof}
In Eq.~(\ref{eq31}), $k=2$ corresponds to the numerical scheme for
calculation of $\zeta(s)$ as
\begin{eqnarray}
(1-3^{1-s})(1-2^{-s})\zeta(s)=1-\dfrac{2}{3^s}+\dfrac{1}{5^s}+\dfrac{1}{7^s}-\dfrac{2}{9^s}+\dfrac{1}{11^s}\cdots
~~~~~ \label{eq34}
\end{eqnarray}
which is arrived by multiplying $3^{1-s}$ on Eq.~(\ref{eq12}) and
subtracting from Eq.~(\ref{eq12}). The series in Eq.~(\ref{eq34})
can be written into the form of Eq.~(\ref{eq31}) by binomial
expansion:
\begin{eqnarray}
\sum_{k=1}^{\infty}\left[\dfrac{1}{(6k\!-\!5)^s}-\dfrac{2}{(6k\!-\!3)^s}+\dfrac{1}{(6k\!-\!1)^s}\right]
\!\!=\! \sum_{n=2}^{\infty} \left[
\dfrac{\Gamma(n+s)}{\Gamma(s)\;n!}\dfrac{(1\!-\!2\cdot
3^n\!+\!5^n)}{6^{n+s}} \zeta(n\!+\!s)\right]\label{eq35}
\end{eqnarray}
where $1\!-\!2\cdot 3^n\!+\!5^n=0$ for $n=0,1$ leads to the
convergence order of $O(n^{-1-2Re(s)})$ roughly as fast as
$\zeta(2)$. In Eq.~(\ref{eq31}), $k=3$ corresponds to
\begin{eqnarray}
(1-5^{1-s})(1-2^{-s})\zeta(s)=1+\dfrac{1}{3^s}-\dfrac{4}{5^s}+\dfrac{1}{7^s}+\dfrac{1}{9^s}+\dfrac{1}{11^s}+\dfrac{1}{13^s}-\dfrac{4}{15^s}+\dfrac{1}{17^s}+\cdots~~~~\label{eq36}
\end{eqnarray}
Linear combination of Eqs.~(\ref{eq34}), (\ref{eq36}) and so on
can generate series to calculate $\zeta(s)$ converging as fast as
$\zeta(N)$, provided that proper combination coefficients (solved
from a set of linear equations) let all terms of $n<N$ vanish in
the summation of $n$ when combining $k$ in Eq.~(\ref{eq31}). For
example, the following combined series with the specific
coefficients converges as fast as $\zeta(6)$ on $Re(s)>0$:
\begin{eqnarray}
&&\!\!\!\!\!\dfrac{6^s}{3}\!\left(\!1\!-\!\dfrac{2}{3^s}\!+\!\dfrac{1}{5^s}+\cdots\right)\!-\dfrac{10^{s+2}}{756}\!\left(\!1\!+\!\dfrac{1}{3^s}\!-\!\frac{4}{5^s}\!+\cdots
\right)\!\!+\!\dfrac{18^{s+1}}{70}\!\left(\!
1\!+\!\dfrac{1}{3^s}\!+\!\dfrac{1}{5^s}\!+\!\dfrac{1}{7^s}\!-\!\frac{8}{9^s}\!+\cdots\right)\nonumber\\
&&\quad=\left[(3^{s-1}-1)-\dfrac{625}{189}(5^{s-1}-1)+\dfrac{81}{35}(9^{s-1}-1)\right](2^{s}-1)\Gamma(s)\zeta(s)
~~~~~~~\label{eq37}
\end{eqnarray}
It shows that Eqs.~(\ref{eq1}), (\ref{eq34}), and (\ref{eq37})
need total $10^6$, $10^4$, and $85$ terms [ $15,25,45$ terms
chosen for the alternating series in Eq.~(\ref{eq37}) from left to
right, respectively] to achieve $0.37471336-0.27518432i$, $
0.36010325-0.26624621i$, and $0.36010259-0.26624619$,
respectively, for calculation of $\zeta(0.2\!+\!2i)$ whose value
is $0.36010259-0.26624620i$.

\subsection{$\eta(s)$ reproduced by
Lagrange interpolation on a set of infinite number of integer eta
functions.}
\quad\quad Explicitly Eq.~(\ref{eq29}) can be
reformed into
\begin{eqnarray}
\eta(s)=\lim_{m\to\infty}\sum_{k=0}^{m} \prod_{\substack{j=0 \\
j\ne k}}^{m}[s\!-\!(j\!+\!2)]
\dfrac{(-1)^{k+m}\eta(k\!+\!2)}{k!(m-k)!} =\sum_{k=0}^{\infty}
\eta(k\!+\!2) \prod_{\substack{j=0
\\ j\ne k}}^{\infty}\dfrac{s-(j+2)}{(k\!+\!2)\!-\!(j\!+\!2)}~~~~~\label{eq38}
\end{eqnarray}
which is an exact Lagrange interpolation formula on infinite
number of integers $2,3,4,\cdots$. The interpolation also can be
done on a subset of integers based on Eq.~(\ref{eq30}). There
exist infinite number of good table of nodes for convergent
interpolation of $\eta(s)$. And some fast convergent iteration
methods$^{\cite{R:11}}$ developed for Lagrange interpolation type
equation can be applied to numerically find all the roots of
$\eta(s)=0$ simultaneously.

\subsection{$\eta(s)$ is admissible to Melzak transform for polynomials.}
\quad\quad Melzak transform is inherited from combinatorial
identities and finite difference theory for polynomials of finite
degree. A basic Melzak transform is defined as:
\begin{thm}
If $f(x)$ is a polynomial of degree $m$, the following transform
holds
\begin{eqnarray}
f(x-y)=y\binom{m-y}{m}\sum_{k=0}^m(-1)^k
\binom{m}{k}\dfrac{f(x-k)}{y-k}~~~~~~~(y\ne
0,1,\cdots,m)~~~~\label{eq39}
\end{eqnarray}
for $x,y\in{\mathbb C}$. Choosing $x=0$ yields a special case:
\begin{eqnarray}
f(-y)=y\binom{m-y}{m}\sum_{k=0}^m(-1)^k
\binom{m}{k}\dfrac{f(-k)}{y-k}~~~~~~(y\ne
0,1,\cdots,m)~~~~\label{eq40}\end{eqnarray}\label{thm2.2}\vspace{-30pt}
\end{thm}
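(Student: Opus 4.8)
The plan is to recognize the right-hand side of Eq.~(\ref{eq39}) as the Lagrange interpolation polynomial, in the variable $y$, of the function $g(y):=f(x-y)$ through the $m+1$ nodes $y=0,1,\dots,m$, and then to invoke uniqueness of polynomial interpolation. First I would rewrite the prefactor. Using $\binom{m-y}{m}=\frac{(m-y)(m-1-y)\cdots(1-y)}{m!}$ one obtains $y\binom{m-y}{m}=\frac{(-1)^m}{m!}P(y)$ with $P(y):=\prod_{j=0}^{m}(y-j)=y(y-1)\cdots(y-m)$, since $y\prod_{j=1}^m(j-y)=(-1)^m y\prod_{j=1}^m(y-j)=(-1)^m P(y)$. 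Because $P(y)/(y-k)=\prod_{j\ne k}(y-j)$ is an honest polynomial in $y$, the apparent simple poles at $y=0,1,\dots,m$ on the right of Eq.~(\ref{eq39}) are all removable, and that side equals $\frac{(-1)^m}{m!}\sum_{k=0}^m(-1)^k\binom{m}{k}f(x-k)\prod_{j\ne k}(y-j)$, a polynomial in $y$ of degree at most $m$. Since $\deg f=m$, the left-hand side $f(x-y)$ is likewise a polynomial in $y$ of degree at most $m$.

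Next I would check that the two sides agree at $y=\ell$ for each $\ell\in\{0,1,\dots,m\}$. On the right, the product $\prod_{j\ne k}(\ell-j)$ vanishes whenever $k\ne\ell$ (the factor $j=\ell$ then occurs), so only the $k=\ell$ term survives; evaluating $\prod_{j\ne\ell}(\ell-j)=\ell!\,(-1)^{m-\ell}(m-\ell)!$ and collecting the signs $(-1)^m$, $(-1)^\ell$ together with $\binom{m}{\ell}=m!/(\ell!(m-\ell)!)$ collapses that term to exactly $f(x-\ell)$, which is the left-hand side at $y=\ell$. Having $m+1$ coincidences of two polynomials of degree $\le m$, their difference is the zero polynomial, so Eq.~(\ref{eq39}) holds identically in $y$, and in particular at every $y\notin\{0,1,\dots,m\}$ where the displayed quotients are defined. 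Equation (\ref{eq40}) is then merely the specialization $x=0$.

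There is no deep obstacle here; the only points demanding care are bookkeeping ones — tracking the sign $(-1)^{m-\ell}$ produced by the descending factors in $\prod_{j\ne\ell}(\ell-j)$ and confirming that it cancels against the $(-1)^m$ and $(-1)^\ell$ already present, and likewise the sign in passing from $y\binom{m-y}{m}$ to $\frac{(-1)^m}{m!}P(y)$. The one conceptual remark worth stressing is that the right-hand side of Eq.~(\ref{eq39}) must not be read as a genuinely singular object: it is a polynomial in disguise, and it is precisely this feature that the later sections exploit when letting $m\to\infty$ and feeding an entire function such as $\eta(s)$ (with the pole of $\zeta$ removed) into the transform.
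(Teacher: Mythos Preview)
Your argument is correct: recognizing the right-hand side as the Lagrange interpolation polynomial of $g(y)=f(x-y)$ at the nodes $y=0,1,\dots,m$, together with the sign bookkeeping you outline, cleanly establishes Eq.~(\ref{eq39}), and Eq.~(\ref{eq40}) is immediate. There is nothing to compare against here, as the paper does not supply its own proof of Theorem~\ref{thm2.2} but simply cites the literature$^{\cite{R:10}}$; your self-contained derivation is a welcome addition, and the closing remark that the right-hand side is a polynomial in disguise is exactly the point the paper exploits when passing to $m\to\infty$ in Eqs.~(\ref{eq29}) and (\ref{eq38}).
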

The proof is in literature$^{\cite{R:10}}$. Equation (\ref{eq29})
shows that $\eta(s)$ is admissible to Melzak transform by
replacing $y\!=\!s-2,\;f(-k)\!=\!\eta(k+2),$ and
$f(-y)\!=\!\eta(s)$ in Eq.~(\ref{eq40}). Like analytic
continuation, Melzak transform can be used to extend a function
from integer domain into complex domain (e.g., defining complex
index Bernoulli numbers).

Admissible condition to Melzak transform for $m\!\to\!\infty$ can
be used to characterize an entire function which behaves like a
pseudo-finite degree polynomial. If $f_m(-k)$ is the truncated
polynomial of $f(-k)$ by cutting off all degrees above $m$, then
the requirement for $f(-k)$ being admissible to Melzak transform
in Eq.~(\ref{eq40}) for $m\!\to\!\infty$ is
\begin{eqnarray}
\lim_{m\to\infty} y\binom{m-y}{m}\sum_{k=0}^m(-1)^k
\binom{m}{k}\dfrac{[f(-k)-f_m(-k)]}{y-k}=0~~~~~~(y\ne
0,1,\cdots,m)~~~~~\label{eq41}
\end{eqnarray}
For instance, exponential decay and sinc damping are admissible,
which satisfy both $ \displaystyle{\lim_{k\to\infty}f(-k)=0}$ and
$\displaystyle{\lim_{m\to\infty}f^{(m)}(0)/m=0}$,
 while cosine and sine are not admissible.

\section{$\zeta(s)$ expanded by zeros of symmetrized factorials}
\quad\quad From Eq.~(\ref{eq29}), $\eta(s)=0$ is equivalent to
\begin{eqnarray}
\lim_{m\to\infty}\sum_{k=0}^{m} \binom{m}{k}
\dfrac{(-1)^k\eta(k+2)}{k+2-s}=0\label{eq42}
\end{eqnarray}
which unfortunately can not be solved analytically. To unveil the
feature of $\eta(s)\!=\!0$ on $0\!<\!Re(s)\!<\!1$, we turn to
apply various factorization by zeros on $\eta(s)$ in
Eq.~(\ref{eq29}).

\subsection{$\eta(s)$ as the summation of symmetrized
factorials of all even degrees whose zeros all have real part of
$1/2$.} \quad\quad The functional symmetry of $\zeta(s)$ or
$\eta(s)$ is the core reason to cause the special feature of
$\zeta(s)=0$. We found that symmetrized factorials arisen from the
functional symmetry have all zeros with real part of $1/2$.
\begin{thm}
All roots of the following symmetrized factorial polynomial
equation
\begin{eqnarray}
\displaystyle{\prod_{j=1}^{n}\left(a_j-1+s\right)+\prod_{j=1}^{n}\left(a_j-s\right)}=0~~~~~~(n>1)\label{eq43}
\end{eqnarray}
have real part of $1/2$ and nonzero imaginary part if all real
$(a_j\!-1/2)$ do not vanish simultaneously and the nonzero
$(a_j\!-1/2)$ have the same sign.\label{thm3.1}\end{thm}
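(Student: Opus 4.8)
The plan is to absorb the shift by setting $w=s-\tfrac12$ and $b_j=a_j-\tfrac12$, where every $b_j$ is real by hypothesis. Then $a_j-1+s=w+b_j$ and $a_j-s=b_j-w$, so equation (\ref{eq43}) is equivalent to
\[
\prod_{j=1}^{n}(w+b_j)+(-1)^{n}\prod_{j=1}^{n}(w-b_j)=0,
\qquad\text{equivalently}\qquad
\prod_{j=1}^{n}(w+b_j)=(-1)^{n+1}\prod_{j=1}^{n}(w-b_j),
\]
and the theorem becomes the statement that every root $w$ of this symmetric equation has $\mathrm{Re}(w)=0$ and $w\neq 0$. The hypotheses say exactly that the nonzero $b_j$ share one common sign $\sigma\in\{+1,-1\}$ and that not all $b_j$ vanish.

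The first and central step is a modulus comparison. For $w=u+iv$ with $u,v\in\mathbb{R}$ one has $|w\pm b_j|^{2}=(u\pm b_j)^{2}+v^{2}$, hence
\[
|w+b_j|^{2}-|w-b_j|^{2}=4\,u\,b_j .
\]
Suppose some root had $u\neq 0$. Taking absolute values in the displayed equation forces $\prod_j|w+b_j|=\prod_j|w-b_j|$. But for that root the numbers $u\,b_j$ with $b_j\neq 0$ all carry the one sign $\mathrm{sgn}(u)\,\sigma$, so each factor satisfies $|w+b_j|\ge|w-b_j|$ (when $\mathrm{sgn}(u)\,\sigma=+1$) or $|w+b_j|\le|w-b_j|$ (when it is $-1$), the inequality being strict exactly for the indices with $b_j\neq 0$; since at least one such index exists, multiplying the factors yields a strict inequality $\prod_j|w+b_j|\neq\prod_j|w-b_j|$, a contradiction. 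Hence $u=0$, i.e.\ $\mathrm{Re}(s)=\tfrac12$. It is precisely here that the common-sign hypothesis is indispensable: with $b_j$ of opposite signs the factor inequalities can offset one another, and already $n=2$, $b_1=-b_2\neq 0$ produces the real roots $w=\pm b_1$.

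The second step disposes of $w=0$. Substituting $w=0$ gives $\prod_j b_j=(-1)^{n+1}(-1)^{n}\prod_j b_j=-\prod_j b_j$, so $2\prod_j b_j=0$, which holds iff some $b_j=0$, i.e.\ iff some $a_j=\tfrac12$. Consequently, if no $a_j$ equals $\tfrac12$ then $s=\tfrac12$ is not a root, and together with step one the theorem follows in full. If some $a_j=\tfrac12$ then $s=\tfrac12$ is genuinely a root (of multiplicity at least $\#\{j:a_j=\tfrac12\}$, since the left side of (\ref{eq43}) then carries that many explicit factors $s-\tfrac12$); step one still shows every \emph{other} root lies on $\mathrm{Re}(s)=\tfrac12$ with nonzero imaginary part, so the assertion in that boundary case is to be understood up to the unavoidable factor $(s-\tfrac12)$.

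I do not anticipate a genuine analytic obstacle: the argument rests on the elementary fact that on $\mathrm{Re}(w)=0$ the points $w+b$ and $w-b$ are reflections of each other across the imaginary axis (so $|w+b|=|w-b|$), whereas off that line $|w+b|$ and $|w-b|$ are monotonically ordered in $b$, consistently so when all the $b_j$ pull the same way. The only points requiring care are (i) guaranteeing that the product of the factor-inequalities is \emph{strict} — which is exactly why the hypothesis forbids all $b_j$ from vanishing — and (ii) the bookkeeping of the case where some $a_j=\tfrac12$, in which "nonzero imaginary part" must be read modulo the forced root $s=\tfrac12$. I would also note that the restriction $n>1$ merely excludes the trivial $n=1$ case, where (\ref{eq43}) reduces to the constant $2b_1$.
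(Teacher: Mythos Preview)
Your proof is correct and follows essentially the same route as the paper: both shift to $w=s-\tfrac12$, take moduli to force $\prod_j|w+b_j|=\prod_j|w-b_j|$, and use the factorwise inequality $|w+b_j|^2-|w-b_j|^2=4\,\mathrm{Re}(w)\,b_j$ together with the common-sign hypothesis to derive a contradiction when $\mathrm{Re}(w)\neq 0$. Your treatment of the $w=0$ case is in fact more careful than the paper's---you correctly observe that $s=\tfrac12$ is a genuine root precisely when some $a_j=\tfrac12$, whereas the paper's parenthetical remark (``can not hold\ldots when at least one $(a_j-\tfrac12)\neq 0$'') has the logic slightly garbled; in the paper's intended applications all $a_j-\tfrac12$ are strictly positive, so the issue does not arise.
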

\begin{proof}
Change the variable to be $x=s-1/2$, Equation~(\ref{eq43}) becomes
\begin{eqnarray}
\prod_{j=1}^n \left(a_j-\dfrac{1}{2}+x\right)= -\prod_{j=1}^n
\left(a_j-\dfrac{1}{2}-x\right)\label{eq44}
\end{eqnarray}
If any root $x$ has nonzero real part as $x=\delta+i\tau$, then
Eq.~(\ref{eq44}) requires
\begin{eqnarray}
\frac{\displaystyle{\prod_{j=1}^n}\bigg|a_j-\dfrac{1}{2}+\delta+i\tau\bigg|^2}
{\displaystyle{\prod_{j=1}^n}\bigg|a_j-\dfrac{1}{2}-\delta-i\tau\bigg|^2}
=\frac{\displaystyle{\prod_{j=1}^n} \left[\left(
a_j-\dfrac{1}{2}+\delta\right)^2+\tau^2\right]}
{\displaystyle{\prod_{j=1}^n}\left[\left(a_j-\dfrac{1}{2}-\delta\right)^2+\tau^2\right]}=1\label{eq45}
\end{eqnarray}
However, when all $(a_j-1/2)$ have the same sign, if $\delta\ne
0$, the norm ratio in Eq.~(\ref{eq45}) will always be greater or
smaller than $1$. Equation (\ref{eq45}) holds only if $\delta=0$
or $x=i\tau$. Thus all $2{\lfloor n/2\rfloor}$ roots of
Eq.~(\ref{eq43}) have real part of $1/2$ and nonzero imaginary
part [ Eq.~(\ref{eq44}) can not hold for real $s\!=\!1/2$ (i.e.,
$x\!=\!0$) when at least one $(a_j\!-1/2)\ne 0$].
\end{proof}

The anti-symmetrized version of Eq.~(\ref{eq43}) (the difference
of the two factorials) can also be proved to have all roots with
real part of $1/2$ (and $s=1/2$ is a root too). In this paper we
only focus on the symmetrized version.

Applying the functional symmetry of Eq.~(\ref{eq3}) on $\eta(s)$
in Eq.~(\ref{eq25}), we have
\begin{eqnarray}
\eta(s)+\eta(1-s) =\zeta(2)+\sum_{n=2}^{\infty}
\dfrac{\chi(n)}{n!}\left[\displaystyle{\prod_{j=0}^{n-2}}(j+2-s)+\displaystyle{\prod_{j=0}^{n-2}}(j+1+s)\right]
~~~~~\label{eq46}
\end{eqnarray}
which is valid in the critical strip because both $\eta(s)$ and
$\eta(1-s)$ expanded via alternating series in Eq.~(\ref{eq1})
are valid on $0<Re(s)<1$. Considered Theorem $\ref{thm3.1}$,
Equation (\ref{eq46}) can be factorized by zeros of symmetrized
factorials of all even degrees:
\begin{eqnarray}
&&\!\!\!\!\!\!\!\!\eta(s)+\eta(1-s)\nonumber\\
&&\!\!\!\!\!=\zeta(2)\!+\!\dfrac{3\chi(2)}{2!}+\sum_{n=3}^{\infty}
\!\!\left(\![1\!-\!(-1)^n]\!+\!\dfrac{n^2\!-\!1}{2}[1\!+\!(-1)^n]\right)\dfrac{\chi(n)}{n!}
\prod_{j=1}^{{\lfloor
\frac{n-1}{2}\rfloor}}\!\!\left[\left(s\!-\!\frac{1}{2}\right)^2\!+\!\nu_{n,j}^2\right]
~~~~~~\label{eq47}
\end{eqnarray}
where $\{1/2\pm i\nu_{n,j}\}$ are the complex conjugated zeros of
the symmetrized factorial of degree $2{\lfloor (n-1)/2\rfloor}$
in the square brackets in Eq.~(\ref{eq46}). The first few
$\{\nu_{n,j}\}$ are
\begin{eqnarray}
\{\nu_{3,j}\}\!=\pm\dfrac{1}{2}\sqrt{15},\;\;\{\nu_{4,j}\}\!=\pm\dfrac{1}{2}\sqrt{7},
\;\;\{\nu_{5,j}\}\!=\pm\dfrac{1}{2}\sqrt{\!103\!\pm
8\sqrt{151}}~~\label{eq48}
\end{eqnarray}
with $j=1,2,\cdots,2{\lfloor (n-1)/2\rfloor}$. If $\eta(s)$ is
expanded by Taylor series in Eq.~(\ref{eq24}), since the zeros of
$(1\!-\!s)^n\!+\!s^n$ are solvable, the factorization by zeros
becomes
\begin{eqnarray}
&&\!\!\!\!\!\!\!\!\!\!\!\eta(s)+\eta(1-s)
\nonumber\\
&&\!\!\!\!\!=2\eta(1)\!-\!\eta^{(1)}(1)\!+\!\sum_{n=2}^{\infty}
\dfrac{(-1)^n\eta^{(n)}(1)}{n!}\prod_{k=1}^{\lfloor
\frac{n}{2}\rfloor}\left\{\left(s\!-\!\frac{1}{2}\right)^2\!\!+\!\dfrac{1}{4}\!\left[\tan\left(\frac{(2k-1)\pi}{2n}\right)\right]^2\right\}
~~~~~\label{eq49}
\end{eqnarray}
A trivial case is Taylor expansion at $s=1/2$ as factorization by
a single repeated zero.

\subsection{$\eta(s)$ as the summation of symmetrized
factorials of the same infinite degree whose zeros all have real
part of $1/2$.} \quad\quad If the functional symmetry is applied
on $\eta(s)$ in Eq.~(\ref{eq29}), it has
\begin{eqnarray}
\eta(s)+\eta(1-s)= \lim_{m\to\infty}\sum_{k=0}^{m}
\dfrac{(-1)^k\eta(k+2)}{k!(m-k)!}\Bigg[\prod_{\substack{j=0 \\
j\ne k}}^{m}(j+2-s)+\prod_{\substack{j=0 \\ j\ne k}
}^{m}(j+1+s)\Bigg] ~~~~\label{eq50}
\end{eqnarray}
Considered Theorem $\ref{thm3.1}$, $\eta(s)$ in Eq.~(\ref{eq50})
can be factorized by zeros of the symmetrized factorials of
infinite degree as
\begin{eqnarray}
\eta(s)+\eta(1-s) &=& \lim_{m\to\infty}\sum_{k=0}^{m}
\dfrac{2(-1)^k\eta(k+2)}{k!(m-k)!}
\prod_{j=1}^{\frac{m}{2}}\left[\left(s-\frac{1}{2}\right)^2+\tau_{k,j}^2\right]
~~~~\label{eq51}
\end{eqnarray}
where $\{1/2 \pm i\tau_{k,j}\}$ are the complex conjugated zeros
of each symmetrized factorial in the square brackets of
Eq.~(\ref{eq50}), and an even $m$ is chosen for convenience in
this paper.

\subsection{$\eta(s)$ as the difference of two symmetrized factorials of the same infinite degree whose zeros all have real
part of $1/2$.} \quad\quad Equation (\ref{eq50}) is a linear
combination of the symmetrized factorials of the same degree. To
deal with the case of linear combination, we have the following
result:
\begin{thm}
The equation below is a linear combination of symmetrized
factorials:
\begin{eqnarray}
\sum_{k=0}^{m} c_k \Bigg[\prod_{\substack{j=0 \\
j\ne k}}^m (a_j-1+s)+\prod_{\substack{j=0 \\
j\ne k}}^m (a_j-s)\Bigg]=0~~~~~~~~(m>1)~~~~\label{eq52}
\end{eqnarray}
with all coefficients $c_k$ of the same sign. Then all roots of
Eq.~(\ref{eq52}) must have real part of $1/2$ and nonzero
imaginary part if all real ($a_j-1/2$) do not vanish
simultaneously, and the nonzero ($a_j-1/2$) have the same
sign.\label{thm3.2}
\end{thm}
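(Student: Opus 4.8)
The plan is to reduce Eq.~(\ref{eq52}) to the single symmetrized factorial of Theorem~\ref{thm3.1}. Put $x = s - \frac{1}{2}$ and $b_j = a_j - \frac{1}{2}$, so that $a_j - 1 + s = b_j + x$ and $a_j - s = b_j - x$, and set $F(x) := \sum_{k=0}^m c_k \prod_{j\ne k}(b_j + x)$, the product taken over $j\in\{0,\dots,m\}$ with $j\ne k$. Since $\prod_{j\ne k}(b_j - x) = \prod_{j\ne k}\big(b_j + (-x)\big)$, the left-hand side of Eq.~(\ref{eq52}) equals $F(x) + F(-x)$. We may assume every $c_k > 0$; then $F$ has degree $m$ with leading coefficient $\sum_k c_k > 0$.

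The heart of the argument, and the step I expect to be hardest, is to show that $F$ has only real, non-positive roots, not all of which are $0$. I would use the rational identity $F(x) = \big(\prod_{j=0}^m (b_j + x)\big)\, g(x)$ with $g(x) = \sum_{k=0}^m \frac{c_k}{b_k + x}$, where the factor $\prod_j(b_j+x)$ clears the poles of $g$ so that the right side is the polynomial $F$. For $x$ in the open upper half-plane, $\mathrm{Im}\,g(x) = \sum_k c_k\,\mathrm{Im}\,\frac{1}{b_k + x} < 0$ because the $b_k$ are real and each $c_k > 0$; hence $g$ has no zero off the real axis, and since every zero of $F$ is either a zero of $g$ or one of the points $-b_j$, all $m$ roots of $F$ are real. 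For real $x > 0$ we have $g(x) > 0$ and $\prod_j(b_j+x) > 0$ (as $b_j \ge 0$), so $F(x) > 0$; thus no root of $F$ is positive, and all are $\le 0$. Finally, not all roots can be $0$: otherwise $F(x) = \big(\sum_k c_k\big)x^m$, but the coefficient of $x^{m-1}$ in $F$ equals $\sum_k c_k\big(\sum_j b_j - b_k\big)$, a sum of non-negative terms (using $b_j \ge 0$ and $m>1$) that vanishes only when all $b_j = 0$, contrary to hypothesis. The one delicate point is bookkeeping multiplicities when the $b_j$ are not distinct: a value occurring $\mu$ times contributes a zero of order $\mu$ to $\prod_j(b_j+x)$ but only a simple pole of $g$, of positive residue $\sum_{j:\,b_j=\cdot}c_j$, so the root count and the sign conclusions are unchanged.

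Granting this, write $F(x) = C\prod_{i=1}^m(x + r_i)$ with $C = \sum_k c_k > 0$ and the $r_i \ge 0$ real and not all zero. Then $F(x) + F(-x) = 0$ becomes $\prod_{i=1}^m(r_i + x) + \prod_{i=1}^m(r_i - x) = 0$, which is exactly the equation of Theorem~\ref{thm3.1} (Eqs.~(\ref{eq43})--(\ref{eq44})) with $n = m > 1$ and with the $r_i$ playing the role of the $a_j - \frac{1}{2}$: they are of one sign and do not all vanish. Theorem~\ref{thm3.1} then yields $\mathrm{Re}(x) = 0$ and, as there, $x\ne 0$ — i.e. $\mathrm{Re}(s) = \frac{1}{2}$ and $\mathrm{Im}(s)\ne 0$. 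Equivalently, one may finish by re-running the norm-ratio computation of Theorem~\ref{thm3.1} directly: a root $x = \delta + i\tau$ of $F(x)+F(-x)=0$ forces $|F(x)| = |F(-x)|$, i.e. $\prod_i\big[(r_i+\delta)^2+\tau^2\big] = \prod_i\big[(r_i-\delta)^2+\tau^2\big]$, which is impossible for $\delta\ne 0$ once some $r_i > 0$.
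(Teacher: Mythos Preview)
Your proof is correct and takes a genuinely different route from the paper's. The paper argues directly on the combined sum: it sets the ratio of $\sum_k c_k\prod_{j\ne k}(a_j-s)$ to $\sum_k c_k\prod_{j\ne k}(a_j-1+s)$ equal to $-1$, factors out the full products to isolate $\sum_k c_k/(a_k-s)$ and $\sum_k c_k/(a_k-1+s)$, squares the norms, and then expands the resulting expression (their Eq.~(\ref{eq57})) into diagonal terms $c_k^2\prod_{j\ne k}r_j^2$ versus $c_k^2\prod_{j\ne k}R_j^2$ and cross terms $2c_kc_\ell[(a_k-\tfrac12\mp\delta)(a_\ell-\tfrac12\mp\delta)+\tau^2]\prod_{j\ne k,\ell}\cdots$. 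The same-sign hypotheses on the $c_k$ and the $a_j-\tfrac12$ are used term-by-term to force strict inequality between numerator and denominator whenever $\delta\ne 0$.

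Your argument instead absorbs the combinatorics into the auxiliary polynomial $F(x)=\sum_k c_k\prod_{j\ne k}(b_j+x)$ and reduces the statement to Theorem~\ref{thm3.1} after showing that $F$ has only real, non-positive roots. The half-plane observation that $\mathrm{Im}\,g(x)<0$ for $\mathrm{Im}(x)>0$ is a clean replacement for the cross-term bookkeeping in the paper, and your positivity check on $(0,\infty)$ together with the $x^{m-1}$-coefficient argument handles the sign and nondegeneracy conditions neatly. The gain is conceptual: you make explicit that Theorem~\ref{thm3.2} is Theorem~\ref{thm3.1} applied to the factored form $F(x)=C\prod_i(x+r_i)$, whereas the paper's computation, though self-contained, obscures that reduction. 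Two minor points worth stating explicitly: you are tacitly taking $b_j\ge 0$ without loss of generality (the substitution $x\mapsto -x$ leaves $F(x)+F(-x)$ invariant and flips the signs of the $b_j$), and your multiplicity remark correctly accounts for repeated $b_j$, where $F$ acquires a zero of order $\mu-1$ at the repeated value.
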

\begin{proof}
Equation (\ref{eq52}) requires
\begin{eqnarray}
\dfrac{\displaystyle{\sum_{k=0}^{m}c_k\prod_{\substack{j=0 \\
j\ne k}}^{m}(a_j-s)}} {\displaystyle{\sum_{k=0}^{m} c_k
\prod_{\substack{j=0 \\
j\ne k}}^{m}(a_j-1+s)}}=
\dfrac{\displaystyle{\prod_{j=0}^{m}(a_j-s)} \sum_{k=0}^{m}
\dfrac{c_k}{a_k-s}} {\displaystyle{\prod_{j=0}^{m}(a_j-1+s)}
\sum_{k=0}^{m} \dfrac{c_k}{a_k-1+s}}=-1~~~~\label{eq53}
\end{eqnarray}
where the norm ratio should be $1$:
\begin{eqnarray}
\dfrac{\displaystyle{\Bigg|\prod_{j=0}^{m}(a_j-s)\Bigg|^2}\Bigg|
\sum_{k=0}^{m} \dfrac{c_k}{a_k-s}\Bigg|^2}
{\displaystyle{\Bigg|\prod_{j=0}^{m}(a_j-1+s)\Bigg|^2}
\Bigg|\sum_{k=0}^{m}
\dfrac{c_k}{a_k-1+s}\Bigg|^2}=1~~~~\label{eq54}
\end{eqnarray}
Assume that there exists a root $s=1/2+\delta+i\tau$ for
Eq.~(\ref{eq52}). We can define the norms
\begin{eqnarray}
r_j\equiv|a_j-s|=\sqrt{(a_j-\frac{1}{2}-\delta)^2+\tau^2}~~~~\label{eq55}\\
R_j\equiv|a_j-1+s|=\sqrt{(a_j-\frac{1}{2}+\delta)^2+\tau^2}~~~~\label{eq56}
\end{eqnarray}
for $j=0,1,\cdots,m$. Then Eq.~(\ref{eq54}) becomes
\begin{eqnarray}
1&\!\!=\!\!&\dfrac{\displaystyle{\left(\prod_{j=0}^{m}
r_j^2\right) \left(\left[\sum_{k=0}^{m}
\dfrac{c_k}{r_k^2}\left(a_k-\frac{1}{2}-\delta\right)\right]^2+\tau^2\left[\sum_{k=0}^{m}
\dfrac{c_k}{r_k^2}\right]^2\right)}
}{\displaystyle{\left(\prod_{j=0}^{m}R_j^2\right)
\left(\left[\sum_{k=0}^{m}
\dfrac{c_k}{R_k^2}\left(a_k-\frac{1}{2}+\delta\right)\right]^2+\tau^2
\left[\sum_{k=0}^{m}
\dfrac{c_k}{R_k^2}\right]^2\right)}}\nonumber\\
&\!\!=\!\!&\dfrac{\displaystyle{\sum_{k=0}^{m}
c_k^2\prod_{\substack{j=0 \\
j\ne k}}^{m} r_j^2 + \sum_{k=0}^{m-1}\sum_{\ell=k+1}^{m} 2c_k
c_\ell\left
[\left(a_k-\frac{1}{2}-\delta\right)\left(a_\ell-\frac{1}{2}-\delta\right)+\tau^2\right]\prod_{\substack{j=0 \\
j\ne k\,,\,\ne \ell}}^{m} r_j^2}} {\displaystyle{\sum_{k=0}^{m}
c_k^2\prod_{\substack{j=0 \\
j\ne k}}^{m} R_j^2 + \sum_{k=0}^{m-1}\sum_{\ell=k+1}^{m} 2c_k
c_\ell
\left[\left(a_k-\frac{1}{2}+\delta\right)\left(a_\ell-\frac{1}{2}+\delta\right)+\tau^2\right]\prod_{\substack{j=0 \\
j\ne k\,,\,\ne \ell}}^{m} R_j^2}} ~~~~~~~\label{eq57}
\end{eqnarray}
Suppose that all real ($a_j-1/2$) do not vanish simultaneously,
and the nonzero ($a_j-1/2$) have the same sign. If $\delta$ has
the same sign as the nonzero ($a_j-1/2$), it has $r_j\!<\!R_j$ for
all $j$, then Eq.~(\ref{eq57}) can not hold for all $c_k$ of the
same sign because the numerator is always smaller than the
denominator. If $\delta$ and the nonzero ($a_j-1/2$) have opposite
sign, then the numerator is always greater than the denominator.
Thus if $s=1/2+\delta+i\tau$ is a root of Eq.~(\ref{eq52}), it
must have $\delta\!=\!0$. And for real $s\!=\!1/2$ (i.e.,
$\tau\!=\!0$), Equation (\ref{eq53}) can not hold when at least
one $(a_j-1/2)\ne 0$.
\end{proof}

From Theorem $\ref{thm3.2}$, we have the following factorization
by roots of Eq.~(\ref{eq52}):
\begin{eqnarray}
\sum_{k=0}^{m} c_k \Bigg[\prod_{\substack{j=0 \\
j\ne k}}^m (a_j-s)+\prod_{\substack{j=0 \\
j\ne k}}^m (a_j-1+s)\Bigg]= 2 \left(\sum_{k=0}^{m} c_k
\right)\prod_{j=1}^{\frac{m}{2}}\left[\left(s-\frac{1}{2}\right)^2+
T_{j}^2\right]~~~~\label{eq58}
\end{eqnarray}
where $\{1/2\pm i T_j\}$ are the corresponding roots. On the other
hand, from Theorem $\ref{thm3.1}$, we have another factorization
by zeros of each symmetrized factorial:
\begin{eqnarray}
\sum_{k=0}^{m} c_k \Bigg[\prod_{\substack{j=0 \\
j\ne k}}^m (a_j-s)+\prod_{\substack{j=0 \\
j\ne k}}^m (a_j-1+s)\Bigg]= \sum_{k=0}^{m} c_k\left(
2\prod_{j=1}^{\frac{m}{2}}\left[\left(s-\frac{1}{2}\right)^2+\tau_{k,j}^2\right]\right)~~~~\label{eq59}
\end{eqnarray}
where $\{1/2\pm i \tau_{k,j}\}$ are the zeros of the $k$th
symmetrized factorial. Comparing Eq.~(\ref{eq59}) to
Eq.~(\ref{eq58}) for $s=1/2$ reveals one correlation among the
imaginary parts of all zeros:
\begin{eqnarray}
\prod_{j=1}^{\frac{m}{2}}
T_{j}^2=\dfrac{\displaystyle{\sum_{k=0}^{m} c_k
\prod_{j=1}^{\frac{m}{2}}\tau_{k,j}^2}}{\displaystyle{\sum_{k=0}^{m}
c_k}}~~~~\label{eq60}
\end{eqnarray}

Specifically, in Eq.~(\ref{eq50}), if the same sign coefficients
are grouped separately, Theorem $\ref{thm3.2}$ can be applied on
Eq.~(\ref{eq50}) to factorize by zeros of two combined
polynomials:
\begin{eqnarray}
\eta(s)+\eta(1\!-\!s)&\!\!=\!\!& \lim_{m\to\infty} 2
\left(\sum_{k=0}^{\frac{m}{2}}
\dfrac{\eta(2k+2)}{(2k)!(m-2k)!}\right)
\prod_{j=1}^{\frac{m}{2}}\left[\left(s-\frac{1}{2}\right)^2+\Theta_{j}^2\right]\nonumber\\
&&-\lim_{m\to\infty} 2 \left(\sum_{k=0}^{\frac{m}{2}-1}
\dfrac{\eta(2k+3)}{(2k+1)!(m-2k-1)!}\right)
\prod_{j=1}^{\frac{m}{2}}\left[\left(s-\frac{1}{2}\right)^2+\Phi_{j}^2\right]~~~~~\label{eq61}
\end{eqnarray}
where $\{1/2\pm i \Theta_j\}$ and $\{1/2\pm i \Phi_j\}$ are the
zeros of the combined polynomials of even and odd $k$ terms,
respectively. Equation (\ref{eq61}) is arrived by first applying
the functional symmetry on Eq.~(\ref{eq29}), and then combining
the symmetrized factorials with coefficients of the same sign in
Eq.~(\ref{eq50}). The order can be switched. Before applying the
functional symmetry, we can first combine the factorials with
coefficients of the same sign in Eq.~(\ref{eq29}), or
alternatively add an auxiliary function to convert all the
combination coefficients in Eq.~(\ref{eq29}) to be positive and
then subtract it [which avoids to deal with even and odd $k$
terms]. For example, choosing
$f(x)={\binom{2x+2m}{2m}}/{\binom{x+m}{m}}$ for Melzak transform
in Eq.~(\ref{eq39}), then for $m\ge 1$, $y\ne 0,1,\cdots,m $, and
$x=0$, we have
\begin{eqnarray}
y\binom{m\!-\!y}{m}\sum_{k=0}^m
\binom{2k}{k}\binom{2m\!-\!2k}{m\!-\!k}\dfrac{1}{y-k}=\dfrac{\displaystyle{\binom{2m}{m}\binom{2m\!-\!2y}{2m}}}{\displaystyle
{\binom{m\!-\!y}{m}}}=\dfrac{2^{2m}}{m!}\prod_{j=0}^{m-1}
(j\!+\!\frac{1}{2}-y)~~~~\label{eq62}
\end{eqnarray}
Substituting $y=s-2$ in Eq.~(\ref{eq62}) reduces to
\begin{eqnarray}
\dfrac{\pi}{[(\frac{m-1}{2})!]^2}\prod_{j=0}^{m-1}
(j+\frac{5}{2}-s)=\sum_{k=0}^m
\dfrac{2^{-2m}\pi}{[(\frac{m-1}{2})!]^2} \binom{2k}{k}\binom{2m-2k}{m-k} \prod_{\substack{j=0 \\
j\ne k}}^m (j+2-s)\label{eq63}
\end{eqnarray}
which is a special case of the following identity from Melzak
transform:
\begin{eqnarray}
\dfrac{\pi}{\sin(\pi\beta)} \prod_{j=0}^{m-1}
(j\!+\!\gamma\!+\!1\!-\!\beta\!-\!s)=\sum_{k=0}^m \dfrac{(m\!-\!k\!-\!\beta)!(k\!+\!\beta\!-\!1)!}{k!(m-k)!}  \prod_{\substack{j=0 \\
j\ne k}}^m (j+\gamma-s)~~~\label{eq64}
\end{eqnarray}
where $\beta \notin \mathbb{Z}$ and $\{\beta,\gamma\}\in{\mathbb
C}$.

For $0\le k\le m$, the summation coefficients in Eq.~(\ref{eq63})
are always positive:
\begin{eqnarray}
\dfrac{2^{-2m}\pi}{[(\frac{m-1}{2})!]^2}
\binom{2k}{k}\binom{2m-2k}{m-k}=\dfrac{1}{[(\frac{m-1}{2})!]^2}
\dfrac{(k-\frac{1}{2})!(m-k-\frac{1}{2})!}{k!(m-k)!}\ge
\dfrac{1}{k!(m-k)!}\label{eq65}
\end{eqnarray}
where we considered the fact that for $0\le k\le m$,
$\binom{m}{k}$ has peak value at $\binom{m}{m/2}$.

Taking $m\!\to\!\infty$ in Eq.~(\ref{eq63}) and adding with
Eq.~(\ref{eq29}) obtains
\begin{eqnarray}
\eta(s)&\!\!\!=&\!\!\!\!\lim_{m\to\infty}\sum_{k=0}^m
\!\dfrac{\!(-1)^k\eta(k\!+\!2)\!+\!
\dfrac{(k\!-\!\frac{1}{2})!(m\!-\!k\!-\!\frac{1}{2})!}{[(\frac{m-1}{2})!]^2}}{k!(m-k)!} \! \prod_{\substack{j=0 \\
j\ne k}}^m
(j\!+\!2\!-\!s)\!-\!\dfrac{\pi}{\left[(\frac{m-1}{2})!\right]^2}\!
\prod_{j=0}^{m-1} (j\!+\!\frac{5}{2}\!-\!s)\nonumber\\
&\!\!\equiv\!\!&\lim_{m\to\infty}\sum_{k=0}^m \epsilon_k  \prod_{\substack{j=0 \\
j\ne k}}^m (j+2-s)-\dfrac{\pi}{\left[(\frac{m-1}{2})!\right]^2}
\prod_{j=0}^{m-1} (j+\frac{5}{2}-s)\label{eq66}
\end{eqnarray}
where all $\epsilon_k>0$ due to Eq.~(\ref{eq65}) and
$\eta(k+2)<1$. The sum of the coefficients is
\begin{eqnarray}
\sum_{k=0}^m \epsilon_k &\!\!\!\equiv\!\!\!&\sum_{k=0}^m
\!\dfrac{(-1)^k\eta(k\!+\!2)\!+\!
\dfrac{(k\!-\!\frac{1}{2})!(m\!-\!k\!-\!\frac{1}{2})!}{[(\frac{m-1}{2})!]^2}}{k!(m-k)!}
=\dfrac{\chi(m\!+\!1)}{(m\!+\!1)!}\!+\!\dfrac{\pi}{\left[(\frac{m-1}{2})!\right]^2}~~~~~~
\label{eq67}
\end{eqnarray}
where $\chi(m+1)$ is defined in Eq.~(\ref{eq26}).

Equation (\ref{eq66}) can be numerically verified. For example,
choosing a small $m\!=\!20$ in Eq.~(\ref{eq66}) outputs
$0.70082616\!+\!0.43532002i$ for $\eta(0.2\!+\!2i)$ whose value is
$0.70077353\!+\!0.43513124i$. For combination with all
$\epsilon_k\!>\!0$ in Eq.~(\ref{eq66}), we have the following
result:

\begin{thm}
Assume that $f(x)$ is a polynomial of degree $m$ with positive
leading coefficient, and have all real distinct zeros of
$\{a_1,a_2,\cdots,a_m\}$. Polynomial $g(x)$ of degree $m\!-\!1$
is obtained by linear combination as
\begin{eqnarray}
g(x)=c_1\dfrac{f(x)}{x-a_1}+\cdots+c_m\dfrac{f(x)}{x-a_m}\label{eq68}
\end{eqnarray}
Then $g(x)$ has $m\!-\!1$ real zeros $\{b_1,b_2,\cdots,b_{m-1}\}$
which interlace with $m$ real zeros of $f(x)$ as
$a_1\!<\!b_1\!<\!a_2\!<\cdots<\!a_{m-1}\!<\!b_{m-1}\!<\!a_m$ if
and only if all $c_i$ are positive.\label{thm3.3}
\end{thm}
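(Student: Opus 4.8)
The plan is to reduce the entire statement to one elementary computation: the sign of $g$ at the nodes $a_1,\dots,a_m$. Write $f(x)=\lambda\prod_{j=1}^{m}(x-a_j)$ with leading coefficient $\lambda>0$. Each summand $c_i f(x)/(x-a_i)=\lambda c_i\prod_{j\ne i}(x-a_j)$ is then a genuine polynomial of degree $m-1$, so $g$ is a polynomial of degree at most $m-1$ whose leading coefficient is $\lambda\sum_{i=1}^{m}c_i$; and at a node the sum collapses to a single term,
\[
g(a_k)=\lambda\,c_k\!\!\prod_{\substack{1\le j\le m\\ j\ne k}}\!\!(a_k-a_j),\qquad\text{so}\qquad\operatorname{sign}g(a_k)=(-1)^{m-k}\operatorname{sign}c_k,
\]
because exactly $m-k$ of the factors $a_k-a_j$ (those with $j>k$) are negative while $\lambda>0$. (Taking $c_i\equiv1$ here gives $g=f'$, so the theorem generalizes the familiar Rolle-type interlacing of a real-rooted polynomial with its derivative.) This identity drives both directions.

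For the ``if'' direction, assume every $c_i>0$. Then $\sum_i c_i>0$, so $g$ has degree exactly $m-1$ and therefore at most $m-1$ real zeros, while the values $g(a_1),g(a_2),\dots,g(a_m)$ strictly alternate in sign. The intermediate value theorem produces at least one zero of $g$ in each of the $m-1$ open intervals $(a_k,a_{k+1})$, and the degree bound then forces exactly one simple zero $b_k\in(a_k,a_{k+1})$ for every $k$ and none elsewhere; that is the claimed interlacing $a_1<b_1<a_2<\cdots<a_{m-1}<b_{m-1}<a_m$. (Equivalently, $g(x)=f(x)\sum_i c_i/(x-a_i)$ and $\frac{d}{dx}\sum_i c_i/(x-a_i)=-\sum_i c_i/(x-a_i)^2<0$, so $\sum_i c_i/(x-a_i)$ decreases strictly from $+\infty$ to $-\infty$ on each $(a_k,a_{k+1})$ and keeps one sign on each unbounded interval, which gives the same count.)

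For the ``only if'' direction, suppose $g$ has $m-1$ real zeros $b_1<\cdots<b_{m-1}$ with $a_k<b_k<a_{k+1}$ for all $k$. Then $g\not\equiv0$ and no $b_i$ equals any $a_j$, so $g(a_k)\ne0$ for each $k$ and hence $c_k\ne0$. Matching $m-1$ distinct zeros against $\deg g\le m-1$ shows $\deg g=m-1$ with all zeros simple, so $g$ has exactly one zero (namely $b_k$) inside each $(a_k,a_{k+1})$ and none outside $[a_1,a_m]$; consequently $g(a_k)$ and $g(a_{k+1})$ have opposite signs for every $k$. Comparing with $\operatorname{sign}g(a_k)=(-1)^{m-k}\operatorname{sign}c_k$ forces $\operatorname{sign}c_k$ to be independent of $k$, i.e.\ all $c_i$ share a single sign. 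Since $g$ and $-g$ have the same zero set, the interlacing property depends on the $c_i$ only up to an overall sign, so after that harmless normalization all $c_i>0$, as asserted.

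I do not expect a genuine obstacle here: this is the classical interlacing lemma, and the work is bookkeeping around the sign formula for $g(a_k)$. The two points that want a moment's care are (i) in the forward direction, confirming that $\deg g=m-1$ \emph{exactly} --- this is precisely where positivity of the \emph{sum} $\sum_i c_i$ (not merely of each $c_i$) is used, to make the zero count tight; and (ii) in the converse, ruling out $c_k=0$ and invoking strictness of the interlacing so that the sign transitions at the $b_k$ are honest sign changes rather than tangencies.
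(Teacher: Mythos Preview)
The paper does not actually prove this theorem: it merely states ``The proof can be found elsewhere'' and cites Fisk's monograph (Lemma~1.20). So there is no in-paper argument to compare against; you have supplied what the paper omits.

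Your proof is correct and is the standard one. The key computation $\operatorname{sign}g(a_k)=(-1)^{m-k}\operatorname{sign}c_k$ is exactly the engine used in Fisk's treatment, and both directions follow cleanly from it as you describe. Your two flagged subtleties --- that $\sum_i c_i>0$ is needed to pin $\deg g=m-1$ exactly, and that strict interlacing forces $c_k\ne 0$ --- are the right places to pause.

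One remark on the ``only if'' direction: as you yourself observe, the interlacing of zeros is invariant under $g\mapsto -g$, so the most one can conclude from the zero configuration alone is that all $c_i$ share a \emph{common} sign, not that they are specifically positive. The theorem as stated in the paper is therefore slightly imprecise on this point, and your ``harmless normalization'' is the honest way to close the gap; one could also read the theorem's positivity claim as modulo the obvious overall sign.
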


The proof can be found elsewhere$^{\cite{R:14}}$. Applying Theorem
$\ref{thm3.3}$ on Eq.~(\ref{eq66}) yields
\begin{eqnarray}
\eta(s)&\!\!=\!\!&\lim_{m\to\infty}\left[\left(\!
\dfrac{\chi(m\!+\!1)}{(m\!+\!1)!}\!+\!\dfrac{\pi}{\left[(\frac{m-1}{2})!\right]^2}\!\right)
\!\prod_{j=0}^{m-1}
(j\!+\!2\!+d_j\!-\!s)-\dfrac{\pi}{\left[(\frac{m-1}{2})!\right]^2}
\prod_{j=0}^{m-1} (j\!+\!\frac{5}{2}\!-\!s)\right]\nonumber\\
&\!\!\equiv\!\!& \lim_{m\to\infty}
\left[f_d(s)-f_{h}(s)\right]\label{eq69}
\end{eqnarray}
where $0\!<\!d_j\!<\!1$ is required by roots interlacing from
Theorem $\ref{thm3.3}$.

Theorem $\ref{thm1.1}$ denies any real zero for $\eta(s)$ on
$Re(s)>0$. Thus there can not exist any interlacing segment for
three or more zeros between the zeros
$\{2\!+\!d_0,3\!+\!d_1,\cdots,m\!+\!1\!+\!d_{m-1}\}$ of $f_d(x)$
and the zeros $\{2.5,3.5,\cdots,m\!+\!1.5\}$ of $f_h(s)$,
otherwise the combined polynomial will have at least one real
zero. For example, if only one zero $j\!+\!2\!+\!d_{j}$ of
$f_d(s)$ is in-between two consecutive zeros
$\{j\!+\!2.5,j\!+\!3.5\}$ of $f_h(s)$ to form a three-zero
interlacing, then at two ends where $f_h(s)=0$, the combined
polynomial $f_d(s)-f_h(s)$ will have opposite sign, the same as
$f_d(s)$. This means that at least one time sign change occurs
in-between (causing a real zero) for the combined polynomial.
Considered both $0<d_j<1$ and nonexistence of zeros interlacing
segments, the only possibility to arrange the two sequences of
the zeros of $f_d(s)$ and $f_h(s)$ to ensure the combined
polynomial $\eta(s)$ having no real zeros on real positive $s$
will be
\begin{eqnarray}
2.5<2\!+\!d_0<3\!+\!d_1<3.5<4.5<\cdots<m\!+\!d_{m-2}<m\!+\!1\!+\!d_{m-1}<m\!+\!1.5\label{eq70}
\end{eqnarray}
where the smallest zero is from $f_{h}(s)$, since it has $
f_{h}(s)+1/2<f_d(s)<f_{h}(s)+1$ due to $1/2<\eta(s)<1$ for all
real $s>0$. Equation (\ref{eq70}) also can be expressed as
\begin{eqnarray}
0<d_{2\ell+1}<\frac{1}{2}<d_{2\ell}<1\;\;\;\;\;\;(\ell=0,1,\cdots,\frac{m}{2})\label{eq71}
\end{eqnarray}
We numerically verified the validity of Eqs.~(\ref{eq70}) or
(\ref{eq71}). Moreover, here we list a few properties of the two
functions $f_d(s)$ and $f_h(x)$ defined in Eq.~(\ref{eq69}):

(i) Two consecutive zeros of $f_{d}(s)$:
$\{2\ell\!+\!2\!+\!d_{2\ell}, 2\ell\!+\!3\!+\!d_{2\ell+1}\}$ with
$\ell\!\in\! [0,m/2]$ are within the interval of two consecutive
zeros of $f_{h}(s)$: $\{2\ell\!+\!2.5, 2\ell\!+\!3.5\}$ with
$\ell\!\in\! [0,m/2]$.

(ii) On real $s\!>\!0$, $f_{d}(s)$ and $f_{h}(s)$ cross the real
axis $m$ times, and $f_{d}(s)$ is always on top of $f_{h}(s)$ more
than $1/2$ but less than $1$ without crossing each other.

(iii) $f_{h}(s)$ and $f_d(s)$ are monotonic decrease on
$s\in[0,2.5)$, and only have one peak between two consecutive
zeros [as $f_{h}^{(1)}(s)$ and $f_d^{(1)}(s)$ change sign one time
in-between].

The following graph illustrates the displacement of $f_d(s)$ and
$f_h(s)$ on real $s\!>\!0$:
\begin{figure}[!ht]
\centering
\mbox{\includegraphics[scale=0.15]{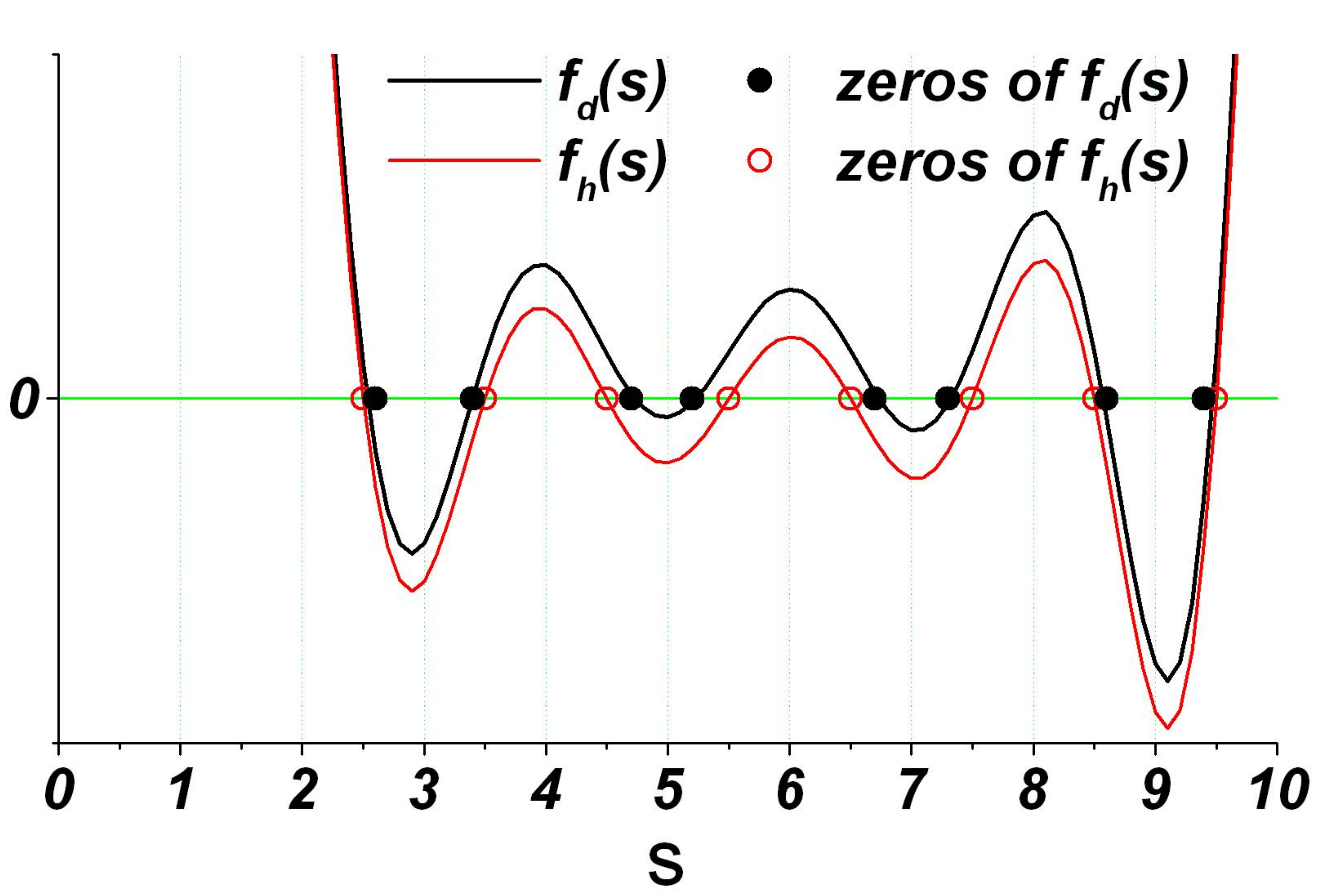}}~~~~~
\caption{\label{fig 1} Schematic graph of factorials $f_d(s)$ and
$f_h(s)$ in Eq.~(\ref{eq69}) on real $s>0$.}
\end{figure}

Considering the functional symmetry of Eq.~(\ref{eq69}), we will
have
\begin{eqnarray}
&&\!\!\!\!\!\!\!\eta(s)+\eta(1-s)=\;\lim_{m\to\infty}\Bigg\{-\;
\dfrac{\pi}{\left[(\frac{m-1}{2})!\right]^2}
\left[\prod_{j=0}^{m-1} (j\!+\!\frac{5}{2}\!-\!s)+
\prod_{j=0}^{m-1}
(j\!+\!\frac{3}{2}\!+\!s)\right]\nonumber\\
&&\qquad+\left(
\dfrac{\chi(m\!+\!1)}{(m\!+\!1)!}\!+\!\dfrac{\pi}{\left[(\frac{m-1}{2})!\right]^2}\right)\!\left[\prod_{j=0
}^{m-1} (j\!+\!2\!+\!d_j\!-\!s)+\prod_{j=0}^{m-1}
(j\!+\!1\!+\!d_j\!+\!s)\right]\Bigg\}~~~~\label{eq72}
\end{eqnarray}
which shows that $\eta(s)$ [proportional to $\eta(s)+\eta(1-s)$]
can be expressed as the difference of two symmetrized factorials
of infinite degree that all zeros of each symmetrized factorial
have real part of $1/2$. In details, applying Theorem
$\ref{thm3.1}$ on Eq.~(\ref{eq72}) obtains
\begin{eqnarray}
\eta(s)+\eta(1-s)&=&\lim_{m\to\infty}
\Bigg\{-\dfrac{2\pi}{\left[(\frac{m-1}{2})!\right]^2}
\prod_{j=1}^{\frac{m}{2}}\left[\left(s-\frac{1}{2}\right)^2+{
\lambda}_{j}^2\right]\nonumber\\
&&\qquad+\left(
\dfrac{2\chi(m\!+\!1)}{(m\!+\!1)!}\!+\!\dfrac{2\pi}{\left[(\frac{m-1}{2})!\right]^2}\right)
\prod_{j=1}^{\frac{m}{2}}\left[\left(s-\frac{1}{2}\right)^2+{
\omega}_{j}^2\right]\Bigg\}~~~~~\label{eq73}
\end{eqnarray}
In Eq.~(\ref{eq73}), $\pm{\lambda}_j$ ($j=1,2,\cdots,m/2$) are $m$
real roots of the following equation
\begin{eqnarray}
F_h(x)&\equiv&
\prod_{j=0}^{m-1} (j+2-ix)+ \prod_{j=0}^{m-1} (j+2+ix)\nonumber\\
&=& 2\cos\left( \sum_{j=0}^{m-1}\arctan\left(\frac{x}{j+2}\right)
\right)\prod_{j=0}^{m-1}
\sqrt{(j+2)^2+x^2}\;\;=0~~~~~~~~~~~~~~~~~~~~~\label{eq74}
\end{eqnarray}
which is arrived by substituting $s\!=\!1/2\!+\!ix$ ($x$ is a real
number!) in Eq.~(\ref{eq72}) as the zeros of the symmetrized
factorials based on Theorem $\ref{thm3.1}$. Similarly
$\pm{\omega}_j$ ( $j=1,2,\cdots,m/2$) in Eq.~(\ref{eq73}) are $m$
real roots of the following equation:
\begin{eqnarray}
F_d(x)&\equiv&\prod_{j=0 }^{m-1}
(j+\frac{3}{2}+d_j-ix)+\prod_{j=0}^{m-1} (j+\frac{3}{2}+d_j+ix)
\nonumber\\
&=&2 \cos\left(
\sum_{j=0}^{m-1}\arctan\left(\frac{x}{j+\frac{3}{2}+d_j}\right)
\right)\prod_{j=0}^{m-1}
\sqrt{\left(j+\frac{3}{2}+d_j\right)^2+x^2}\;\;=0~~~~~~~\label{eq75}
\end{eqnarray}
Below we will analyze the relationship between the roots of
Eqs.~(\ref{eq74}) and (\ref{eq75}).

\subsection{$\eta(s)$ as a single polynomial of infinite degree whose zeros all have real part of
$1/2$.} \quad\quad Since Eqs.~(\ref{eq75}) and (\ref{eq74}) are
polynomials of only even power of $x$ [all odd power of $ix$ in
$F_h(x)$ and $F_d(x)$ vanish otherwise $F_h(x)$ and $F_d(x)$ can
not be real], we can define the new variable as $y=x^2$ so that
$\{\omega_j^2\}$ with $j=1,2,\cdots,m/2$ are the roots of
$F_d(y)=0$ and $\{\lambda_j^2\}$ with $j=1,2,\cdots,m/2$ are the
roots of $F_h(y)=0$. Then we have

\begin{thm}
The $m/2$ distinct real roots $\{\lambda_j^2\}$ of $F_h(y)$ in
Eq.~(\ref{eq74}) (defining $y=x^2$) interlace the $m/2$ distinct
real roots $\{\omega_j^2\}$ of $F_d(y)$ in Eq.~(\ref{eq75})
(defining $y=x^2$) or vice versa as
$\lambda_1^2<\omega_1^2<\lambda_2^2
<\cdots<\lambda_{m/2}^2<\omega_{m/2}^2$ \;or\;
$\omega_1^2<\lambda_1^2<\omega_2^2<
\cdots<\omega_{m/2}^2<\lambda_{m/2}^2$ depending on specific
sequence of $0<d_{2\ell+1}<1/2<d_{2\ell}<1$ in Eq.~(\ref{eq75}).
\label{thm3.4}
\end{thm}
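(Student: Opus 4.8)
The plan is to recast both equations in terms of the two phase functions
\[
\Theta_h(x)=\sum_{j=0}^{m-1}\arctan\frac{x}{j+2},\qquad
\Theta_d(x)=\sum_{j=0}^{m-1}\arctan\frac{x}{\,j+\tfrac32+d_j\,},
\]
which are exactly the arguments occurring in Eqs.~(\ref{eq74}) and (\ref{eq75}). Each of $\Theta_h,\Theta_d$ is continuous and strictly increasing on $[0,\infty)$ with $\Theta(0)=0$ and $\Theta(x)\to m\pi/2$ as $x\to\infty$, so for even $m$ the positive zeros of $F_h$ (respectively $F_d$) are precisely the $m/2$ points at which $\Theta_h$ (respectively $\Theta_d$) takes one of the values $\tfrac{\pi}{2},\tfrac{3\pi}{2},\dots,\tfrac{(m-1)\pi}{2}$; call them $0<\lambda_1<\dots<\lambda_{m/2}$ and $0<\omega_1<\dots<\omega_{m/2}$. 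Since $t\mapsto t^2$ is increasing on $(0,\infty)$, it suffices to prove that the $\lambda_k$ and the $\omega_k$ strictly interlace, and the assertion for $\{\lambda_j^2\}$, $\{\omega_j^2\}$ follows at once.

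I would then control the difference $\Delta(x):=\Theta_d(x)-\Theta_h(x)$. Writing $\delta_j=d_j-\tfrac12$, so $|\delta_j|<\tfrac12$ with $\delta_{2\ell}>0$ and $\delta_{2\ell+1}<0$ by Eq.~(\ref{eq71}), the arctangent subtraction identity yields
\[
\Delta(x)=\sum_{j=0}^{m-1}\arctan\frac{-x\,\delta_j}{(j+2)\bigl(j+\tfrac32+d_j\bigr)+x^2}.
\]
Using $(j+2)\bigl(j+\tfrac32+d_j\bigr)\ge\bigl(j+\tfrac32\bigr)^2$, the elementary bound $|\arctan u|\le|u|$, and a comparison of the resulting series with $\int_0^\infty x\,dt/(t^2+x^2)=\pi/2$, one gets $|\Delta(x)|<\pi/4$ for all $x>0$. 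With this uniform bound a straightforward count of the odd multiples of $\pi/2$ inside the intervals $\bigl(\tfrac{(2k-1)\pi}{2}+\Delta(\lambda_k),\,\tfrac{(2k+1)\pi}{2}+\Delta(\lambda_{k+1})\bigr)$ (and the two end intervals) shows that the $\lambda_k$ and $\omega_k$ strictly interlace if and only if the numbers $\Delta(\lambda_1),\dots,\Delta(\lambda_{m/2})$ all have the same sign (equivalently, the $\Delta(\omega_k)$ all have the same sign). The common sign that occurs fixes which of the two orderings in the statement holds, which is precisely why it depends on the particular sequence $\{d_j\}$.

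The decisive and, I expect, hardest step is therefore to show that $\Delta$ does not change sign along the zero set of $F_h$ (nor of $F_d$); this is where the full interlacing of Eq.~(\ref{eq70}) must be invoked, not merely the sign pattern of the $\delta_j$. After $s=\tfrac12+ix$ the centres $j+\tfrac32+d_j$ of $F_d$ lie nested as $2\ell+2<2\ell+\tfrac32+d_{2\ell}<2\ell+\tfrac52+d_{2\ell+1}<2\ell+3$ inside each consecutive centre pair $\{2\ell+2,2\ell+3\}$ of $F_h$. Grouping the sum for $\Delta(x)$ along those pairs and using that $c\mapsto\arctan(x/c)$ is decreasing and convex on $(0,\infty)$ displays each paired contribution as a difference of two ``bump'' terms; I would combine these paired estimates with the positivity of the coefficients $\epsilon_k$ from Eq.~(\ref{eq66}) and with the interlacing lemma Theorem~\ref{thm3.3} (applied to the real polynomials $F_h(y)$ and $F_d(y)$, $y=x^2$) to conclude that the surviving sign of $\Delta$ at the zeros of $F_h$ is the same as at the zeros of $F_d$. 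The main obstacle is that $\Delta$ genuinely oscillates in $x$ and only its restriction to the two zero sets is monosign, so the argument cannot rely on a global estimate alone: it must exploit the location of the zeros relative to the centres. A useful reduction is the homotopy $d_j\mapsto\tfrac12+t\delta_j$, $t\in[0,1]$, which starts at $F_d\equiv F_h$, keeps the nesting of Eq.~(\ref{eq70}) for every $t$, and — because $|\Delta(\cdot\,;t)|<\pi$ throughout — forbids any zero of $F_d(\cdot\,;t)$ from meeting a zero $\lambda_j$ with $j\ne k$, so each $\omega_k(t)$ stays in $(\lambda_{k-1},\lambda_{k+1})$; strict interlacing at $t=1$ then reduces to ruling out that two of the $\omega_k$ fall into a single gap $(\lambda_k,\lambda_{k+1})$, i.e., exactly to the constant-sign statement above.
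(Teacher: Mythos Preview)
Your route is the paper's route. Both arguments introduce the phase sums $\Theta_h,\Theta_d$ implicit in Eqs.~(\ref{eq74})--(\ref{eq75}), rewrite $\Theta_d-\Theta_h$ via the arctangent subtraction formula (your $\Delta$ is essentially the negative of the sum in the paper's Eq.~(\ref{eq79})), bound that correction in modulus by something below $\pi/2$, and then translate ``the $\lambda_k$ and $\omega_k$ interlace'' into ``the correction keeps one sign at every $\lambda_k$'' (the paper's Eq.~(\ref{eq80})). Your global estimate $|\Delta|<\pi/4$ and the paper's geometric bound on the sum of absolute angles in Fig.~2 serve the same purpose.

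The step you single out as decisive --- that $\Delta(\lambda_1),\dots,\Delta(\lambda_{m/2})$ all share one sign --- is precisely where the paper's argument is tersest: it passes from ``sum of absolute angles $<\pi/2$'' directly to Eq.~(\ref{eq80}) without a separate sign argument. So your proposal is, if anything, more scrupulous than the paper at this point; but the remedies you sketch do not close the gap. The appeal to Theorem~\ref{thm3.3} is misdirected: that lemma requires $F_d(y)$ to be written as a positive combination $\sum_k c_k\,F_h(y)/(y-\lambda_k^2)$, and no such representation is available once the coefficients $\epsilon_k$ of Eq.~(\ref{eq66}) have been absorbed into the shifts $d_j$ via Eq.~(\ref{eq69}). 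The homotopy $d_j\mapsto\tfrac12+t\delta_j$ only confines each $\omega_k(t)$ to $(\lambda_{k-1},\lambda_{k+1})$, as you yourself note; ruling out two $\omega$'s in a single gap is the constant-sign statement again, so the homotopy restates rather than resolves the difficulty. The pairing-by-$\ell$ idea likewise yields no sign control: the paired contribution for indices $2\ell,2\ell+1$ is a difference whose sign depends on the actual magnitudes of $d_{2\ell}-\tfrac12$ and $\tfrac12-d_{2\ell+1}$, not merely on the inequalities of Eq.~(\ref{eq71}).
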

\begin{proof}
First of all, $\{\lambda_j^2\}$ and $\{\omega_j^2\}$ are all real
guaranteed by applying Theorem $\ref{thm3.1}$ on Eq.~(\ref{eq72}).
Finding the roots of $F_h(y)=0$ [or $F_d(y)=0$] is equivalent to
finding the roots of $F_h(x)=0$ [or $F_h(x)=0$] on $x>0$ subject
to a square mapping.

Since $F_d(x)$ and $F_h(x)$ have the same degree and the same sign
of the leading coefficients, if $F_d(x)$ and $F_h(x)$ have sign
interlacing on $x>0$, then their zeros are interlacing on $x>0$.
Without loss of generality, we consider if there is a sign change
of $F_d(x)$ between any two consecutive real positive zeros of
$F_h(x)$. Let the two consecutive zeros of $F_h(x)$ on $x>0$ be
$x_0$ and $x_1$, from Eq.~(\ref{eq74}) we have
\begin{eqnarray}
\cos\left(\sum_{j=0}^{m-1}\arctan\left(\frac{x_0}{j\!+\!2}\right)\right)\!=\!0
\;\;\Longrightarrow\;\;
\sum_{j=0}^{m-1}\arctan\left(\frac{x_0}{j\!+\!2}\right)=\frac{2k\!+\!1}{2}\pi\label{eq76}\\
\cos\left(\sum_{j=0}^{m-1}\arctan\left(\frac{x_1}{j\!+\!2}\right)\right)\!=\!0
\;\;\Longrightarrow\;\;
\sum_{j=0}^{m-1}\arctan\left(\frac{x_1}{j\!+\!2}\right)=\frac{2k\!+\!3}{2}\pi\label{eq77}
\end{eqnarray}
where $k\in{\mathbb{N}_0}$. The sums of the angles in
Eqs.~(\ref{eq76}) and (\ref{eq77})  must have a difference of
$\pi$ because from $x_0$ to $x_1$ is a continuous process and
between $x_0$ and $x_1$, $F_h(x)$ (equivalently the cosine
function) does not change sign (otherwise, they are not
consecutive zeros) so that $x_0$ and $x_1$ corresponds to two
consecutive cosine zeros.

If $F_d(x)$ has sign change on $x_0$ and $x_1$, from
Eq.~(\ref{eq75}) it must satisfy
\begin{eqnarray}
\cos\left(\sum_{j=0}^{m-1}\arctan\left(\frac{x_0}{j\!+\!\frac{3}{2}\!+d_j}\right)\right)
\cos\left(\sum_{j=0}^{m-1}\arctan\left(\frac{x_1}{j\!+\!\frac{3}{2}\!+d_j}\right)\right)<0\label{eq78}
\end{eqnarray}
Considered the inverse tangent formula of sum of angles:
\begin{eqnarray}
\sum_{j=0}^{m-1}\arctan\left(\frac{x}{j\!+\!\frac{3}{2}\!+d_j}\right)
=\sum_{j=0}^{m-1}\arctan\left(\frac{x}{j\!+\!2}\right)-\sum_{j=0}^{m-1}\arctan\left(\frac{x}{j+2+\frac{\rho_j(x)}{d_j-\frac{1}{2}}}\right)\label{eq79}
\end{eqnarray}
with $\rho_j(x)=(j+2)^2+x^2$, Equation (\ref{eq78}) is equivalent
to require
\begin{eqnarray}
\sin\left(\sum_{j=0}^{m-1}\arctan\left(\frac{x_0}{j+2+\frac{\rho_j(x_0)}{d_j-\frac{1}{2}}}\right)\right)
\sin\left(\sum_{j=0}^{m-1}\arctan\left(\frac{x_1}{j+2+\frac{\rho_j(x_1)}{d_j-\frac{1}{2}}}\right)\right)
>0\label{eq80}
\end{eqnarray}
where Eqs.(\ref{eq76}) and (\ref{eq77}) are utilized. Equation
(\ref{eq80}) can be algebraically proved to hold for real positive
$x_0$ and $x_1$ and $d_j$ satisfying Eq.~(\ref{eq71}). But it is
more straightforward to prove Eq.~(\ref{eq80}) by geometric graph
showing that both sums of angles in Eq.~(\ref{eq80}) are less than
$\pi/2$. The following graph shows the complex vectors $a_j+ix$:

\begin{figure}[!ht]
\centering \mbox{\includegraphics[scale=0.5]{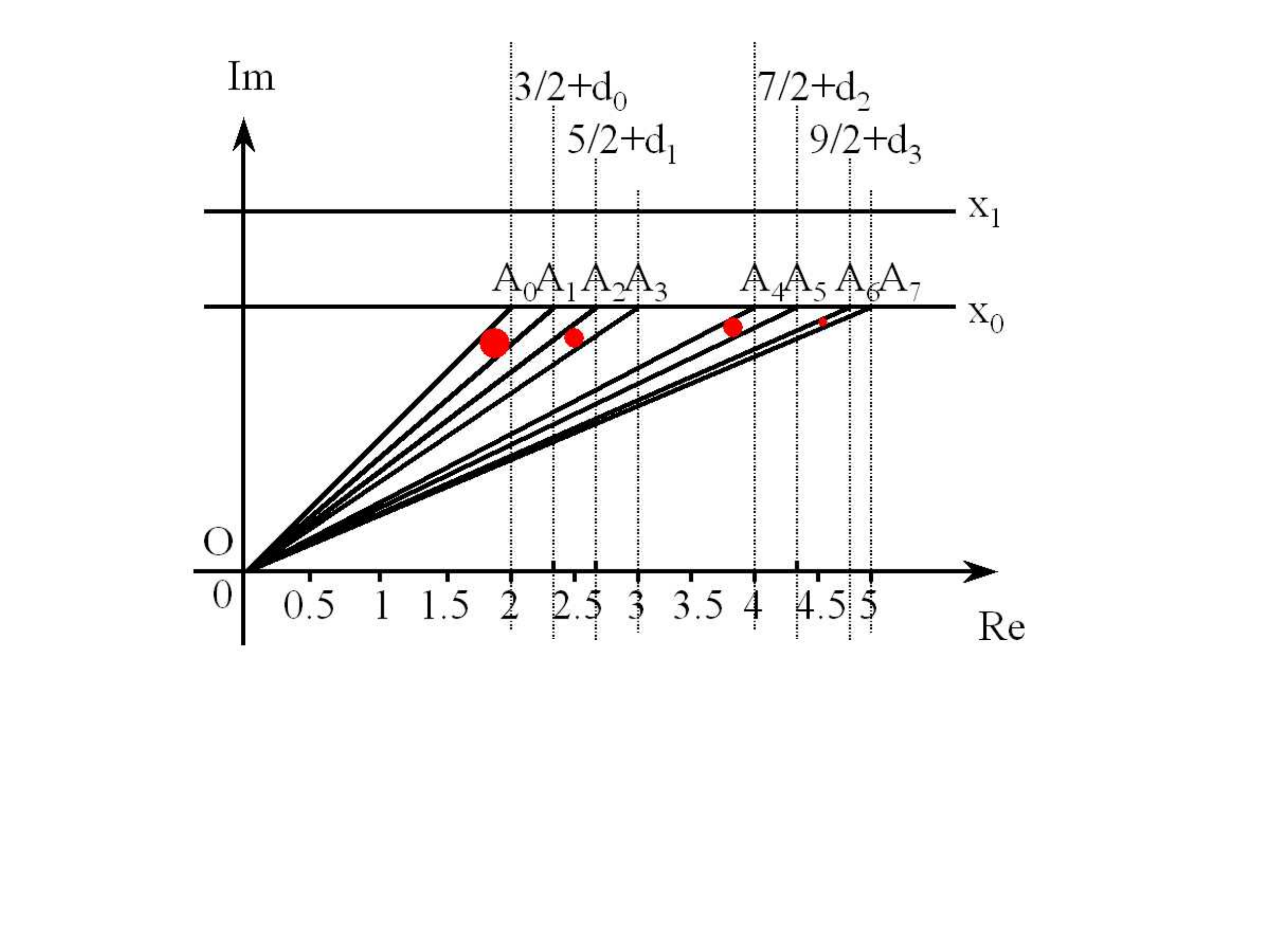}}~~~~~
\caption{\label{fig 2}. Geometric graph of the angles in Eq.
(\ref{eq80}).}
\end{figure}
\noindent in which the angles (containing the balls) are
$\displaystyle{\angle
A_{0}OA_{1}\!=\!\arctan\Big(\dfrac{x_0}{2\!+\!\rho_0(x_0)/(d_0-1/2)}\Big)}$,
$\displaystyle{\angle
A_{2}OA_{3}\!=\!-\arctan\Big(\dfrac{x_0}{3\!+\!\rho_1(x_0)/(d_1-1/2)}\Big)}$
etc. It is obvious that even the sum of absolute angles in
Eq.~(\ref{eq80}) are less than $\pi/2$ so that Eq.~(\ref{eq80})
and then Eq.~(\ref{eq78}) hold, which proves the sign interlacing
between $F_d(x)$ and $F_h(x)$ on $x\!>\!0$. Equivalently, it
concludes that the zeros of $F_h(y)$ and $F_d(y)$ are
interlacing, but the smallest zero can not be determined by a
general sequence of $0<d_{2\ell+1}<1/2<d_{2\ell}<1$ unless
additional information applies.
\end{proof}

In Eq.~(\ref{eq73}), when factorization by zeros between the two
polynomials is done before the limit $m\to\infty$ is taken, then
for a sufficiently large $m$, $\chi(m+1)$ can be chosen as a tiny
negative number for convenience, which is similar to Taylor
expansion of $e^{-x}$ being terminated at a sufficiently large
term with coefficient of either sign while approaching the same
convergent limit. If $\chi(m+1)<0$ is chosen, then applying
Theorem $\ref{thm3.4}$ will lead to an unambiguous zeros
interlacing as
${\lambda}_1^2<\omega_1^2<\lambda_2^2<\cdots<\lambda_{m/2}^2<\omega_{m/2}^2$
due to the requirement of $\eta(1/2)>0$ with $\chi(m+1)<0$ in
Eq.~(\ref{eq73}).

Therefore, changing the variable as $y\!=\!(s\!-\!1/2)^2$,
Equation (\ref{eq73}) indicates that $\eta(s)+\eta(1-s)$ can be
expressed as the difference of two polynomials of $y$:
\begin{eqnarray}
\eta(s)\!+\!\eta(1\!-\!s)\!=\!\lim_{m\to\infty}\!\! \Bigg[\!
\!\left(\!
\dfrac{2\chi(m\!+\!1)}{(m\!+\!1)!}\!+\!\dfrac{2\pi}{\left[(\frac{m-1}{2})!\right]^2}\!\right)
\!\prod_{j=1}^{\frac{m}{2}}\!\left(y\!+\!{\omega}_{j}^2\right)-\dfrac{2\pi}{\left[(\frac{m-1}{2})\!\right]^2}
\prod_{j=1}^{\frac{m}{2}}\!\left(y\!+\!{\lambda}_{j}^2\right)\Bigg]
\label{eq81}
\end{eqnarray}
with interlacing zeros being
$-\omega_{m/2}^2<-\lambda_{m/2}^2<\cdots<
-\lambda_2^2<-\omega_1^2<-\lambda_1^2$ as a result of Theorem
$\ref{thm3.4}$ and $\chi(m+1)<0$.

For general linear combination of two polynomials, the following
result is known:
\begin{thm}
Suppose that $\{a_1,a_2,\cdots,a_n\}$ and $\{b_1,b_2,\cdots,b_n\}$
are all real zeros of $f(x)$ and $g(x)$, respectively, and $g(x)$
interlaces $f(x)$ as $\{b_1<a_1<b_2<\cdots<b_n<a_n\}$. For a
combined polynomial $F(x)$ such that $F(x)=\alpha f(x)+\beta g(x)$
where $\alpha, \beta$ are two real numbers, if $F(x)$ and $g(x)$
have the same degree and have leading coefficients of the same
sign, then $F(x)$ has all real zeros $\{c_1,c_2,\cdots,c_n\}$ and
$f(x)$ interlaces $F(x)$ as $\{a_1<c_1<a_2<\cdots<a_n<c_n\}$,
provided that $\beta<0$.\label{thm3.5}
\end{thm}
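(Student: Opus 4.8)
The plan is to locate the zeros of $F$ by a sign count rather than by algebra. First I would normalise: since $g$ has $n$ distinct real zeros we take $\deg g=n$, and then $\deg f=n$ too, because $\deg f>n$ with $\alpha\ne0$ would force $\deg F=\deg f>n=\deg g$, while $\alpha=0$ would make $F=\beta g$ have leading coefficient of sign opposite to that of $g$ (as $\beta<0$), contradicting the hypothesis; so $\alpha\ne0$, $\deg f=n$, and $\deg F=\deg g=n$. If the leading coefficient of $g$ is negative, I would replace $(f,g,F)$ by $(-f,-g,-F)$: this leaves the zero sets, the interlacing pattern $b_1<a_1<\cdots<b_n<a_n$, and the sign $\beta<0$ unchanged (indeed $-F=\alpha(-f)+\beta(-g)$), while making $-g$, hence also $-F$ by the common-sign hypothesis, have positive leading coefficient. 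So we may assume $g$ has positive leading coefficient.

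Then I would carry out the sign count. Since $f(a_i)=0$, we have $F(a_i)=\beta\,g(a_i)$, and $g(a_i)\ne0$ because strict interlacing puts $a_i$ strictly inside a gap between two consecutive zeros of $g$ (in particular $F$ does not vanish at any $a_i$). With $g$ of positive leading coefficient, $g>0$ on $(b_n,\infty)$ and $g$ changes sign at each $b_j$, so $g(a_i)$ has sign $(-1)^{n-i}$ for $i=1,\dots,n$ (using $a_i\in(b_i,b_{i+1})$ for $i<n$ and $a_n\in(b_n,\infty)$). Since $\beta<0$, $F(a_i)$ then has sign $(-1)^{n-i+1}$, which alternates in $i$, so by the intermediate value theorem $F$ vanishes somewhere in each of $(a_1,a_2),\dots,(a_{n-1},a_n)$. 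For the remaining zero, $F(a_n)<0$ while $F(x)\to+\infty$ as $x\to+\infty$ because $F$ has positive leading coefficient, so $F$ vanishes in $(a_n,\infty)$ as well. This produces $n$ zeros of $F$ lying in $n$ pairwise disjoint intervals; since $\deg F=n$ they are all of them, each simple, none below $a_1$, and they satisfy $a_1<c_1<a_2<c_2<\cdots<a_n<c_n$, i.e.\ $f$ interlaces $F$ exactly as asserted. The negative-leading-coefficient case was already reduced to this one.

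The calculation is routine; the points that need care are the $(-1)^{n-i}$ bookkeeping for the sign of $g$ at the points $a_i$, which is forced entirely by the strict interlacing hypothesis, and the joint use of $\beta<0$ together with the common-sign-of-leading-coefficients hypothesis to pin down the $n$-th zero. These two hypotheses together give $F(a_n)<0$ and $F(+\infty)=+\infty$, so the extra zero sits above $a_n$; with $\beta>0$ one would get the mirror statement, with the extra zero below $a_1$ and $F$ interlacing $f$ the other way, which is exactly why the sign restriction on $\beta$ cannot be dropped. (Alternatively one could deform $F_t=\alpha f+tg$ as $t$ runs from $0$ to $\beta$ and track the zeros in the Hermite--Biehler spirit, but the direct sign count above is shorter and self-contained.)
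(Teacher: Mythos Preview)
Your argument is correct. The normalisation step is clean (the hypotheses force $\alpha\ne0$ and $\deg f=\deg g=\deg F=n$, and the sign-flip reduces to positive leading coefficient for $g$), and the sign-alternation count is the right way to locate the zeros: $F(a_i)=\beta g(a_i)$ has sign $(-1)^{n-i+1}$, giving $n-1$ sign changes inside $(a_1,a_n)$ and one more on $(a_n,\infty)$ from $F(a_n)<0<F(+\infty)$; the degree bound then forces simplicity and the claimed interlacing.

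As for comparison with the paper: the paper does not actually prove this theorem. It merely cites an external source (Liu--Wang, \emph{Adv.\ Appl.\ Math.}\ 38 (2007), Theorem~2.1) for a more general version and moves on. Your sign-count via the intermediate value theorem is the standard elementary route and is entirely self-contained, so in this instance you have supplied more than the paper does. The Hermite--Biehler deformation you mention as an alternative is closer in spirit to the general machinery in the cited reference, but for the specific statement here your direct argument is both shorter and sufficient.
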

A more general version was proved elswhere$^{\cite{R:16}}$.
Equation (\ref{eq81}) can be written into
\begin{eqnarray}
-[\eta(s)\!+\!\eta(1\!-\!s)]&\!\!\!\!=\!\!\!\!&\lim_{m\to\infty}\!
\Bigg[\dfrac{2\pi}{\left[(\frac{m-1}{2})\!\right]^2}
\prod_{j=1}^{\frac{m}{2}}\!\left(y\!+\!{\lambda}_{j}^2\right)
-\!\left(\!
\dfrac{2\chi(m\!+\!1)}{(m\!+\!1)!}\!+\!\dfrac{2\pi}{\left[(\frac{m-1}{2})!\right]^2}\!\right)
\!\prod_{j=1}^{\frac{m}{2}}\!\left(y\!+\!{\omega}_{j}^2\right)
\Bigg]\nonumber\\
&\!\!\!\!\equiv\!\!\!\!& \!\lim_{m\to\infty}\!
\Bigg[\dfrac{2\pi}{\left[(\frac{m-1}{2})!\right]^2}
G_h(y)-\!\left(\!
\dfrac{2\chi(m\!+\!1)}{(m\!+\!1)!}\!+\!\dfrac{2\pi}{\left[(\frac{m-1}{2})!\right]^2}\right)
\!G_d(y)\Bigg]\label{eq82}
\end{eqnarray}
where the two polynomials $G_h(y)$ and $G_d(y)$ have all real
zeros $\{-\lambda_j^2\}$ and $\{-\omega_j^2\}$, respectively, and
their zeros are proved to interlace as
$-\omega_{m/2}^2<-\lambda_{m/2}^2<\cdots<
-\lambda_2^2<-\omega_1^2<-\lambda_1^2$. Moreover, the combined
polynomial $-[\eta(s)+\eta(1-s)]$ and $G_d(y)$ have leading
coefficients of the same sign as the chosen $\chi(m+1)<0$. In
addition, the combination coefficient of $G_d(y)$ is a negative
constant. Therefore Eq.~(\ref{eq82}) satisfies all conditions to
apply Theorem $\ref{thm3.5}$ to yield
\begin{eqnarray}
\eta(s)\!+\!\eta(1\!-\!s)\!=\!\lim_{m\to\infty}\!
\dfrac{2\chi(m\!+\!1)}{(m\!+\!1)!}
\prod_{j=1}^{\frac{m}{2}}\left(y\!+\!{
\Omega_{j}}\right)\!=\!\lim_{m\to\infty}
\!\dfrac{2\chi(m\!+\!1)}{(m\!+\!1)!}
\prod_{j=1}^{\frac{m}{2}}\left[\left(s\!-\!\frac{1}{2}\right)^2\!+\!{
\Omega_{j}}\right]~~\label{eq83}
\end{eqnarray}
where the zeros $\{-\Omega_j\}$ of $\eta(s)+\eta(1-s)$ are all
real and interlacing with the zeros $\{-\lambda_j^2\}$ of
$G_h(y)$ in Eq.~(\ref{eq82}) as
\begin{eqnarray}
-\lambda_{\frac{m}{2}}^2<-\Omega_{\frac{m}{2}}<\cdots<-\lambda_2^2<-\Omega_2<-\lambda_1^2<
-\Omega_1\label{eq84}
\end{eqnarray}
Since $\{\lambda_j^2\}$ are all real positive numbers, Equation
(\ref{eq84}) indicates that $\{\Omega_j\}$ also must be real
positive numbers except for $\Omega_1$ whose sign can not be
determined by Eq.~(\ref{eq84}).

If all $\Omega_j>0$, due to the negative leading coefficient [the
chosen $\chi(m+1)<0$] in Equation~(\ref{eq83}), it will result in
a contradictory $\eta(1/2)<0$. Thus it concludes that all
$\Omega_j>0$ for $j=2,3,\cdots,m/2$ except that $\Omega_1<0$.

However, on the other hand, $\Omega_1<0$ means that
$\eta(s)+\eta(1-s)$ will have two real zeros as
$s=1/2\pm\sqrt{-\Omega_1}$, which seems to be contradictory to
Theorem $\ref{thm1.1}$. In order to avoid this dilemma, it must
have $\Omega_1\le -1/4$ such that the two corresponding "real
zeros" $s\le 0$ and $s\ge 1$ are just out of the convergent
domain (the critical strip) of $\eta(s)+\eta(1-s)$ as
Eq.~(\ref{eq83}) is derived based on Dirichlet series in
Eq.~(\ref{eq1}). Therefore, the factor of $[(s-1/2)^2+\Omega_1]$
still exists, and Theorem $\ref{thm1.1}$ is not violated [within
the convergent domain of $\eta(s)+\eta(1-s)$].

Thus we have the final factorization by zeros for
$\eta(s)+\eta(1-s)$ on $0<Re(s)<1$:
\begin{eqnarray}
\eta(s)+\eta(1-s)&=&\left[(2^{1-s}-2)\pi^{-s} \cos\left(\frac{\pi
s}{2}\right)\Gamma(s)+(1-2^{1-s})\right]\zeta(s)\nonumber\\
&=&\lim_{m\to\infty} \dfrac{2\chi(m\!+\!1)}{(m\!+\!1)!}
\prod_{j=1}^{\frac{m}{2}}\left[\left(s-\frac{1}{2}\right)^2+{
\Omega_{j}}\right]~~~~~~\label{eq85}
\end{eqnarray}
where all $\{\Omega_j\}>0$ except that $\Omega_1\le -\frac{1}{4}$,
and $\{\Omega_j\}$ are determined by Eq.~(\ref{eq72}) as the
difference of two symmetrized factorials. Equation (\ref{eq85})
shows that $\zeta(s)$ is proportional to a single product of
infinite number of quadratic forms $[(s-1/2)^2+\Omega_j]$ with
all $\Omega_j>0$ except that $\Omega_1\le -\frac{1}{4}$, which
immediately endorses Riemann hypothesis in the critical strip.

It is worth to mention that

(i) If $\chi(m+1)>0$ is chosen in Eq.~(\ref{eq73}), it will also
show that $\eta(s)+\eta(1-s)$ can be expressed into a single
polynomial whose all roots have real part of $1/2$. First of all,
following the same procedure will arrive at the same conclusion
that the zeros $\{\lambda_j^2\}$ interlace the zeros
$\{\omega_j^2\}$ or vice versa. In the case of
$\omega_1^2<\lambda_1^2<\omega_2^2<\cdots<\omega_{m/2}^2<\lambda_{m/2}^2$,
Theorem $\ref{thm3.5}$ can be directly applied on Eq.~(\ref{eq81})
to conclude that the zeros $\{-\omega_j^2\}$ interlace the zeros
of $\{-\Omega_j\}$ leading to
\begin{eqnarray}
\eta(s)+\eta(1-s)&=& \lim_{m\to\infty}
\dfrac{2\chi(m\!+\!1)}{(m\!+\!1)!}
\prod_{j=1}^{\frac{m}{2}}\left[\left(s-\frac{1}{2}\right)^2+{
\Omega_{j}}\right]~~~~~~\label{eq83.2}
\end{eqnarray}
with all $\{\Omega_j\}>0$. In the case of
$\lambda_1^2<\omega_1^2<\lambda_2^2<\cdots<\lambda_{m/2}^2<\omega_{m/2}^2$,
defining the variable as $y'\!=\!-(s\!-1/2)^2$, Equation
(\ref{eq73}) becomes
\begin{eqnarray}
\eta(s)\!+\!\eta(1\!-\!s)\!=\!\lim_{m\to\infty}\!\! \Bigg[\!
\!\left(\!
\dfrac{2\chi(m\!+\!1)}{(m\!+\!1)!}\!+\!\dfrac{2\pi}{\left[(\frac{m-1}{2})!\right]^2}\!\right)
\!\prod_{j=1}^{\frac{m}{2}}\!\left(\!{\omega}_{j}^2\!-\!y'\right)-\dfrac{2\pi}{\left[(\frac{m-1}{2})\!\right]^2}
\prod_{j=1}^{\frac{m}{2}}\!\left(\!{\lambda}_{j}^2\!-\!y'\right)\Bigg]
\label{eq81.2}
\end{eqnarray}
Then Theorem $\ref{thm3.5}$ can be applied on Eq.~(\ref{eq81.2})
to conclude that the zeros $\{\omega_j^2\}$ interlace the zeros
$\{\Omega_j\}$ leading to
\begin{eqnarray}
\eta(s)\!+\!\eta(1\!-\!s)\!=\!\lim_{m\to\infty}\!
\dfrac{2\chi(m\!+\!1)}{(m\!+\!1)!}
\prod_{j=1}^{\frac{m}{2}}\left({
\Omega_{j}-y'}\right)\!=\!\lim_{m\to\infty}
\!\dfrac{2\chi(m\!+\!1)}{(m\!+\!1)!}
\prod_{j=1}^{\frac{m}{2}}\left[\left(s\!-\!\frac{1}{2}\right)^2\!+\!{
\Omega_{j}}\right]~~\label{eq83.3}
\end{eqnarray}
with all $\{\Omega_j\}>0$. Thus regardless of the sign of
$\chi(m+1)$, $\eta(s)+\eta(1-s)$ and its approximation with
finite large $m$ will all have the expression of a single
polynomial whose zeros all have real part of $1/2$.

(ii) Similar to Arndt-Gosper formula$^{\cite{R:15}}$, $F_h(x)$ and
$F_d(x)$ in Eqs.~(\ref{eq74}) and (\ref{eq75}) can be written into
explicit forms of polynomial of $x$:
\begin{eqnarray}
F_h(x)&=& \dfrac{2(-1)^{\frac{m}{2}}}{m} \;\sum_{k=0}^{m-1}\;
\prod_{j=0}^{m-1}\left[x-(j+2)\tan\left(\frac{\pi(j-k)}{m}+\frac{\pi}{2m}\right)\right]=0~~~~\label{eq74.2}\\
F_d(x) &=& \dfrac{2(-1)^{\frac{m}{2}}}{m}\; \sum_{k=0}^{m-1}\;
\prod_{j=0}^{m-1}\left[x-(j+\frac{3}{2}+d_j)\tan\left(\frac{\pi(j-k)}{m}+\frac{\pi}{2m}\right)\right]=0~~~~\label{eq75.2}
\end{eqnarray}
for an even $m$. When $m\to\infty$, under midpoint approach,
Equations (\ref{eq74.2}) and (\ref{eq75.2}) become Cauchy
principal integrals:
\begin{eqnarray}
F_h(x)&\!\!=\!\!&\lim_{m\to\infty} 2(-1)^{\frac{m}{2}}\;P.V.\int_{0}^{1} \prod_{j=0}^{m-1}\left[x\!-\!(j\!+\!2)\tan\left(\frac{\pi j}{m}\!-\!\pi y\right)\right]dy=0~~\label{eq74.3}\\
F_d(x)&\!\!=\!\!&\lim_{m\to\infty}
2(-1)^{\frac{m}{2}}\;P.V.\int_{0}^{1}
\prod_{j=0}^{m-1}\left[x\!-\!(j\!+\!\frac{3}{2}\!+\!d_j)\tan\left(\frac{\pi
j}{m}\!-\!\pi y\right)\right]dy=0~~\label{eq75.3}
\end{eqnarray}
where all odd powers of $x$ vanish due to
\begin{eqnarray}
P.V. \int_0^1 \tan\left(\frac{\pi j}{m}-\pi y\right)dy=0
\end{eqnarray}
The interlacing relationship between the zeros $\{\lambda_j^2\}$
of $F_h(y=x^2)$ and $\{\omega_j^2\}$ of $F_d(y=x^2)$ may also be
explored starting from Eqs.~(\ref{eq74.2}) and (\ref{eq75.2}).

(iii) Equation (\ref{eq85}) indicates that all roots of $\zeta(s)$
are a subset (equal to or less than the number) of the total roots
$\{s=1/2\pm i\sqrt{\Omega_j}\}$, since the prefactor of $\zeta(s)$
[i.e., $\kappa(s)\equiv (2^{1-s}-2)\pi^{-s} \cos\left(\pi
s/2\right)\Gamma(s)+(1-2^{1-s})$] might have isolated zeros.
Nevertheless, Equation (\ref{eq85}) implies that no matter where
the roots comes from $\zeta(s)$ or the prefactor $\kappa(s)$, in
the critical strip all their roots must have real part of $1/2$.

(iv) On the other hand, $\zeta(s)$ might have additional zeros
(the trivial ones actually) that are not included in
Eq.~(\ref{eq85}), because the expansion of $\zeta(s)$ in
Eq.~(\ref{eq85}) is derived from Dirichlet series $\eta(s)$ whose
valid domain is $Re(s)>0$. Moreover, to make the functional
symmetry $\eta(1-s)$ also valid via Dirichlet series
representation, the valid domain is restricted in the critical
strip $0<Re(s)<1$. Thus all trivial zeros of $\zeta(s)$ located at
real negative integers do not emerge in Eq.~(\ref{eq85}), and
possible zeros of the prefactor $\kappa(s)$ which are outside the
critical strip also will not show up in Eq.~(\ref{eq85}).

\section{Conclusions}
\quad\quad In this paper, based on absolutely convergent binomial
expansion, alternating Riemann zeta function $\eta(s)$ is found to
be admissible to Melzak transform for infinite degree polynomials.
Specifically, $\eta(s)$ can be expressed as a linear combination
of cyclic polynomials $P_k(s)=\displaystyle{\prod_{j=0,\; j\ne
k}^{m}}(j\!+\!2\!-\!s)$ with $k=0,1,\cdots,m$, which is shown in
Eq.~(\ref{eq29}).

Considered the functional symmetry of Riemann zeta function, the
combined $\eta(s)+\eta(1-s)$, which is proportional to $\eta(s)$,
can be written into a linear combination of symmetrized factorial
polynomials $P_k(s)+P_k(1-s)$, shown in Eq.~(\ref{eq50}). All
roots of $P_k(s)+P_k(1-s)$ for each $k$ have real part of $1/2$.
Moreover, we proved that for a linear combination of
$P_k(s)+P_k(1-s)$ with same sign combination coefficients, all
roots of the combined polynomial will still have real part of
$1/2$.

Riemann hypothesis would be endorsed immediately if the linear
combination coefficients in Eq.~(\ref{eq50}) all had the same
sign. However, the combination coefficients in Eq.~(\ref{eq50})
are alternating between positive and negative, which equivalently
leads to $\eta(s)$ expressed into the difference between two
symmetrized factorials whose roots all have have real part of
$1/2$. Fortunately, we proved that the imaginary parts of the
zeros of the two symmetrized factorials on upper half plane are
interlacing. Based on well-known results about zeros feature of
the combined polynomial from the difference of two interlacing
polynomials, Riemann hypothesis is endorsed to show that the
combined polynomial [proportional to $\zeta(s)$] can be expressed
into a single product of infinite number of quadratic forms
$(s-1/2)^2+\Omega_j$ with all $\Omega_j>0$.

\newpage

\end{document}